\title{Automatic Verification of \\ Floating-Point Accumulation Networks}
\author{
    David K.~Zhang \orcidID{0000-0003-3379-4890} \and
    Alex Aiken \orcidID{0000-0002-3723-9555}
}
\authorrunning{D. K. Zhang \and A. Aiken}
\institute{Stanford University}
\newcommand{\RNE}{\operatorname{\mathsf{RNE}}}
\newcommand{\ulp}{\operatorname{\mathsf{ulp}}}
\newcommand{\nlz}{\mathsf{nlz}}
\newcommand{\nlo}{\mathsf{nlo}}
\newcommand{\ntz}{\mathsf{ntz}}
\newcommand{\nto}{\mathsf{nto}}
\renewcommand{\u}{\bm{\mathsf{u}}}
\newcommand{\TwoSum}{\operatorname{\mathsf{TwoSum}}}
\newcommand{\emin}{e_{\mathrm{min}}}
\newcommand{\emax}{e_{\mathrm{max}}}
\begin{document}
\maketitle

\begin{abstract}
    Floating-point accumulation networks (FPANs) are key building blocks used in many floating-point algorithms, including compensated summation and double-double arithmetic.
    FPANs are notoriously difficult to analyze, and algorithms using FPANs are often published without rigorous correctness proofs.
    In fact, on at least one occasion, a published error bound for a widely used FPAN was later found to be incorrect.
    In this paper, we present an automatic procedure that produces computer-verified proofs of several FPAN correctness properties, including error bounds that are tight to the nearest bit.
    Our approach is underpinned by a novel floating-point abstraction that models the sign, exponent, and number of leading and trailing zeros and ones in the mantissa of each number flowing through an FPAN.
    We also present a new FPAN for double-double addition that is faster and more accurate than the previous best known algorithm.
    \keywords{
        Automatic theorem proving \and
        Floating-point arithmetic \and
        Error-free transformations \and
        TwoSum algorithm \and
        Double-double arithmetic \and
        Quad-double arithmetic \and
        Rounding error
    }
\end{abstract}

\section{Introduction} \label{sec:Introduction}

Many scientific and mathematical problems demand calculations that exceed the native precision limits of floating-point hardware (typically IEEE 64-bit or Intel x87 80-bit). To address this need, numerical programmers turn to \textit{compensated algorithms}, such as Kahan--Babu\v{s}ka--Neumaier summation~\cite{Kahan1965,Babuska1972,Neumaier1974}, and \textit{floating-point expansions}~\cite{Priest1991}, such as double-double and quad-double arithmetic~\cite{Dekker1971,QD}. These techniques enable a processor to execute floating-point operations with double, quadruple, or even higher multiples of its native precision. They are widely adopted in dense~\cite{XBLAS} and sparse~\cite{Evstigneev2022} numerical linear algebra, high-precision quadrature~\cite{Bailey2012}, robust computational geometry~\cite{Shewchuk1997}, fluid dynamics~\cite{Bailey1993,He2001}, quantum chemistry~\cite{Frolov2000,Frolov2003}, correctly rounded transcendental functions~\cite{CRLibM,COREMATH}, and the discovery of new mathematical identities~\cite{Bailey2005}.

All of these techniques are based on common building blocks known as \textit{error-free transformations}~\cite{Rump2005}, which are floating-point algorithms that exactly compute their own rounding errors. For example, the M{\o}ller--Knuth $\TwoSum$ algorithm~\cite{Moller1965,Knuth1969} takes a pair of floating-point numbers $(x, y)$ and computes both their rounded sum $s \coloneqq x \oplus y$ and the exact rounding error $e \coloneqq (x + y) - (x \oplus y)$ incurred in that sum. Here, $\oplus$ denotes rounded floating-point addition, while $+$ denotes true mathematical addition.

Although the $\TwoSum$ algorithm has been extensively studied, proven correct by both pen-and-paper~\cite{Knuth1969,Boldo2017Robustness} and formal methods~\cite{Boldo2017Formal,Muller2022}, issues arise when multiple $\TwoSum$ operations are combined to accumulate three or more floating-point values. Multiple rounding error terms can interact in subtle and unexpected ways, significantly affecting the precision of the overall result when terms that are incorrectly assumed to be negligible are discarded.

\begin{figure}[t]
    \begin{minipage}{0.52\textwidth}
        \centering
        \begin{tikzpicture}[scale=0.5]
            \draw (0,4) node [anchor=east] {$x_0$} -- (10,4) node [anchor=west] {$z_0$};
            \draw (0,3) node [anchor=east] {$y_0$} -- (10,3) node [anchor=west] {$z_1$};
            \draw (0,2) node [anchor=east] {$x_1$} -- (3.75,2);
            \draw (0,1) node [anchor=east] {$y_1$} -- (7.75,1);
    
            \draw (1,4) circle (0.25);
            \draw (1,3) circle (0.25);
            \draw (1,4.25) -- (1,2.75);
    
            \draw (1,2) circle (0.25);
            \draw (1,1) circle (0.25);
            \draw (1,2.25) -- (1,0.75);
    
            \draw (3,3) circle (0.25);
            \draw (3,2) circle (0.25);
            \draw (3,3.25) -- (3,1.75);
    
            \draw (3.75,1.75) rectangle (4.25,2.25);
            \draw (3.75,1.75) -- (4.25,2.25);
            \draw (3.75,2.25) -- (4.25,1.75);
    
            \draw (5,4) circle (0.25);
            \draw (5,3) circle (0.25);
            \draw (5,4.25) -- (5,2.75);
    
            \draw (7,3) circle (0.25);
            \draw (7,1) circle (0.25);
            \draw (7,3.25) -- (7,0.75);
    
            \draw (7.75,0.75) rectangle (8.25,1.25);
            \draw (7.75,0.75) -- (8.25,1.25);
            \draw (7.75,1.25) -- (8.25,0.75);
    
            \draw (9,4) circle (0.25);
            \draw (9,3) circle (0.25);
            \draw (9,4.25) -- (9,2.75);
        \end{tikzpicture}
    \end{minipage}\hfill\vline\hfill\begin{minipage}{0.47\textwidth}
        \centering
        \begin{tikzpicture}[scale=0.5]
            \draw (0,4) node [anchor=east] {$x_0$} -- (9,4) node [anchor=west] {$z_0$};
            \draw (0,3) node [anchor=east] {$y_0$} -- (9,3) node [anchor=west] {$z_1$};
            \draw (0,2) node [anchor=east] {$x_1$} -- (6.75,2);
            \draw (0,1) node [anchor=east] {$y_1$} -- (4.75,1);
    
            \draw (1,4) circle (0.25);
            \draw (1,3) circle (0.25);
            \draw (1,4.25) -- (1,2.75);
    
            \draw (1,2) circle (0.25);
            \draw (1,1) circle (0.25);
            \draw (1,2.25) -- (1,0.75);
    
            \draw (3,4) circle (0.25);
            \draw (3,2) circle (0.25);
            \draw (3,4.25) -- (3,1.75);
    
            \draw (4,3) circle (0.25);
            \draw (4,1) circle (0.25);
            \draw (4,3.25) -- (4,0.75);
    
            \draw (4.75,0.75) rectangle (5.25,1.25);
            \draw (4.75,0.75) -- (5.25,1.25);
            \draw (4.75,1.25) -- (5.25,0.75);
    
            \draw (6,3) circle (0.25);
            \draw (6,2) circle (0.25);
            \draw (6,3.25) -- (6,1.75);
    
            \draw (6.75,1.75) rectangle (7.25,2.25);
            \draw (6.75,1.75) -- (7.25,2.25);
            \draw (6.75,2.25) -- (7.25,1.75);
    
            \draw (8,4) circle (0.25);
            \draw (8,3) circle (0.25);
            \draw (8,4.25) -- (8,2.75);
        \end{tikzpicture}
    \end{minipage}
    \caption{Network diagram of the \textsf{ddadd} algorithm due to Li et al.\ (left) and \textsf{madd}, our new double-double addition algorithm (right). This graphical FPAN notation will be explained in detail in Section~\ref{sec:FPANs}.}
    \label{fig:networks}
\end{figure}
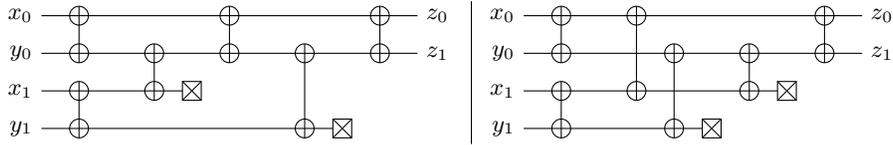

In 2017, Joldes, Muller, and Popescu~\cite{Joldes2017} discovered an error of this type in the \textsf{ddadd} algorithm designed by Li et al.\ for the XBLAS extended-precision linear algebra library~\cite{XBLAS}. Li et al.\ originally claimed that \textsf{ddadd} computes sums with relative error bounded by $2 \cdot 2^{-106} = 2\u^2$ when executed in IEEE \textsf{binary64} (double precision) arithmetic, where $\u = 2^{-53}$ denotes the unit roundoff. Joldes, Muller, and Popescu refuted this claim by identifying a class of inputs in which two discarded error terms interfere constructively, producing a relative error of $2.25 \cdot 2^{-106} = 2.25\u^2$ in the final sum. While this weakened bound does not invalidate the usefulness of XBLAS, it demonstrates that even expert numerical analysts can make subtle mistakes in the analysis of rounding errors.

The \textsf{ddadd} algorithm is a prototypical example of a \textit{floating-point accumulation network (FPAN)} --- a term we introduce to describe a branch-free linear sequence of floating-point sum and $\TwoSum$ operations. Although this class of algorithms, to our knowledge, has never been explicitly named in the floating-point literature, FPANs pervade floating-point algorithms and occur in every single paper referenced in this introduction, along with dozens of software packages~\cite{PythonDocs,JuliaDocs,CRLibM,COREMATH,QD,GQD,AccurateArithmetic,CAMPARY,DoubleFloats,MultiFloats,BaileySoftware}. In addition to their obvious uses for high-precision addition and subtraction, FPANs also occur as subroutines in multiplication, divison, and square root algorithms~\cite{KarpMarkstein1997,QD}, which in turn are used to implement transcendental functions, including the exponential, logarithm, and trigonometric functions, in many standard libraries~\cite{CRLibM,COREMATH,JuliaDocs}.

In this paper, we present an automatic procedure for proving several FPAN correctness properties, including error bounds and nonoverlapping invariants. Our key technical insight is a one-sided reduction from a correctness property $P$ of an FPAN $F$ to a satisfiability problem $S_{P,F}$ in quantifier-free Presburger arithmetic (\texttt{QF{\textunderscore}LIA}). Here, one-sidedness means that if $S_{P,F}$ is unsatisfiable, then $P(F)$ provably holds, but if $S_{P,F}$ is satisfiable, then $P(F)$ may or may not hold. In all cases we have tested, we find that the satisfiability of $S_{P,F}$ is far easier for SMT solvers to determine than direct verification of $P(F)$ in a floating-point theory (\texttt{QF{\textunderscore}FP}). Our reduced problems $S_{P,F}$ can be solved in less than one second, while direct $P(F)$ verifiers fail to terminate after multiple days of runtime.

Using our procedure, we discover and prove the correctness of a new algorithm for double-double addition --- the same problem that \textsf{ddadd} solves --- with strictly smaller relative error and lower circuit depth. Thus, our algorithm, named \textsf{madd} (for ``More Accurate Double-Double addition,'' shown in Figure~\ref{fig:networks}) is both faster and more accurate than the previous best known algorithm for this task. We expect our automatic decision procedure to enable additional novel algorithms to be found by computer search in future work.

Our reduction strategy uses a novel abstraction of floating-point arithmetic that does not consider the full bitwise representation of a floating-point number $x$, but tracks only its sign $s_x$, exponent $e_x$, and the number of leading and trailing zeros and ones in its mantissa $(\nlz_x, \nlo_x, \ntz_x, \nto_x)$. Using this reduced representation, we construct abstract overapproximations of the floating-point sum and $\TwoSum$ operations, expressed as linear inequalities in the variables $(s_x, e_x, \nlz_x, \nlo_x, \ntz_x, \nto_x)$. Although we lose completeness by passing to an overapproximation, we show that our abstraction is nonetheless precise enough to prove best-possible error bounds, tight to the nearest bit, for several FPANs of practical interest, including \textsf{ddadd} and \textsf{madd}. Moreover, we use floating-point SMT solvers to directly verify the soundness of our abstraction.

In summary, this paper makes the following contributions:
\begin{enumerate}
    \item We define \textit{floating-point accumulation networks} (\textit{FPANs}) and show that FPANs occur in many widely used floating-point algorithms.
    \item We give a procedure for reducing correctness properties $P$ of an FPAN $F$ to satisfiability problems $S_{P,F}$ in quantifier-free Presburger arithmetic. Our strategy uses a novel formally verified abstraction of floating-point arithmetic that may be of independent interest.
    \item We empirically demonstrate that $S_{P,F}$ is far easier for modern SMT solvers to reason about than $P(F)$, solving otherwise intractable verification problems in under one second.
    \item We state a new double-double addition algorithm, \textsf{madd}, with lower circuit depth and strictly smaller relative error than the previous best known algorithm, proven by our reduction procedure.
\end{enumerate}

\section{Background} \label{sec:Background}

\subsection{Floating-Point Numbers}

A \textit{floating-point number} in base $b \in \mathbb{N}$ with precision $p \in \mathbb{N}$ is an ordered triple $(s, e, m)$ consisting of a \textit{sign bit} $s \in \{0, 1\}$, an \textit{exponent} $e \in \mathbb{Z}$, and a \textit{mantissa} $m = (m_0, m_1, \dots, m_{p-1})$, which is an sequence of $p$ \textit{digits} $m_k \in \{0, 1, \dots, b - 1\}$. The value represented by $(s, e, m)$ is defined as the following real number:
\begin{equation}
(-1)^s \times (m_0 . m_1 m_2 \dots m_{p-1}) \times b^e
= (-1)^s \sum_{k=0}^{p-1} m_k b^{e-k}
\end{equation}
Floating-point hardware is almost always designed for base $b = 2$, and we assume $b = 2$ throughout this paper.

The IEEE 754 standard~\cite{IEEE7541985,IEEE7542008,IEEE7542019} defines a family of encodings of floating-point numbers $(s, e, m)$ into fixed-width bit-vectors with a specified base $b$, precision $p$, and exponent range $\emin \le e \le \emax$. These standard encodings, listed in Table~\ref{tab:ieeeformats}, have several additional features that are relevant to our analysis.

\begin{itemize}
    \item A floating-point number with $m_0 = 1$ is said to be \textit{normalized}. A nonzero floating-point number can be normalized without changing its represented real value by shifting its first nonzero bit $m_k$ to position $m_0$ and adjusting its exponent $e$ to $e - k$. IEEE 754 requires floating-point numbers to be normalized whenever possible so that the \textit{implicit leading bit} $m_0 = 1$ does not need to be explicitly stored.
    \item The only floating-point numbers that cannot be normalized are zero and very small numbers whose adjusted exponent would fall below the minimum threshold $\emin$. These small nonzero numbers are called \textit{subnormal} and use a special alternative representation with no implicit leading bit.
    \item Zero has two distinct IEEE 754 encodings with opposite sign bits, denoted by \texttt{+0.0} (\textit{positive zero}) and \texttt{-0.0} (\textit{negative zero}). These two values are considered to be equal, i.e., \texttt{+0.0 == -0.0} evaluates to \texttt{true} in any IEEE 754-compliant programming environment.
    \item IEEE 754 defines three non-numeric floating-point values, called \textit{positive infinity} ($+\infty$), \textit{negative infinity} ($-\infty$), and \textit{not-a-number} (\texttt{NaN}). These special values are returned from operations that would otherwise yield an unrepresentably large or indeterminate result. All floating-point values other than $\pm\infty$ and \texttt{NaN} are called \textit{finite}.
\end{itemize}

We assume throughout this paper that all floating-point numbers are normalized or zero, ignoring subnormal numbers, $\pm\infty$, and \texttt{NaN}. We also identify \texttt{+0.0} with \texttt{-0.0}, writing $\pm\mathtt{0.0}$ to denote zero with either sign. These assumptions serve only to simplify exposition and do not affect the generality of our results. Indeed, we prove in Section~\ref{sec:Verification} that our automated proof technique handles all finite floating-point values, including subnormal numbers.

\begin{table}[t]
    \centering
    \setlength\tabcolsep{6pt}
    \begin{tabular}{|c|c|c|c|} \hline
        Format & Base $b$ & Precision $p$ & Exponent range $\{\emin, \dots, \emax\}$ \\\hline\hline
        \textsf{binary16}  & $b = 2$ & $p =  11$ & $e \in \{    -14, \dots,     +15\}$ \\\hline
        \textsf{bfloat16}  & $b = 2$ & $p =   8$ & $e \in \{   -126, \dots,    +127\}$ \\\hline
        \textsf{binary32}  & $b = 2$ & $p =  24$ & $e \in \{   -126, \dots,    +127\}$ \\\hline
        \textsf{binary64}  & $b = 2$ & $p =  53$ & $e \in \{  -1022, \dots,   +1023\}$ \\\hline
        \textsf{binary128} & $b = 2$ & $p = 113$ & $e \in \{ -16382, \dots,  +16383\}$ \\\hline
    \end{tabular}
    \vspace{\baselineskip}
    \caption{Parameters of the floating-point formats defined by the IEEE 754 standard and the nonstandard \textsf{bfloat16} format commonly used in deep learning accelerators.}
    \label{tab:ieeeformats}
\end{table}

\subsection{Rounding and Error-Free Transformations}

In general, the sum or difference of two precision-$p$ floating-point numbers may not be exactly representable as another precision-$p$ floating-point number. To perform calculations at a fixed precision $p$, the result of every operation must be \textit{rounded} by the following procedure:
\begin{enumerate}
    \item Compute the exact sum or difference of the real values represented by the floating-point inputs, as if to infinite precision.
    \item Find and return the closest precision-$p$ floating-point number to the exact result. A \textit{tie-breaking} rule must be used whenever the exact result is equidistant to two neighboring floating-point values.
\end{enumerate}
As is standard in studies of extended-precision floating-point arithmetic \cite{Boldo2017Robustness,Boldo2017Formal}, we assume throughout this paper that all floating-point operations are rounded to nearest with ties broken to even, denoted by $\RNE$. This is the default rounding mode defined by IEEE 754 and is supported on virtually all general-purpose computing hardware. In fact, $\RNE$ is the only rounding mode available in many programming environments, including Python, JavaScript, and WebAssembly~\cite{ECMAScript,WebAssembly}. Following Knuth's convention \cite{Knuth1969}, we distinguish between rounded and exact arithmetic operations using circled operators.
\begin{align}
    x \oplus y &\coloneqq \RNE(x + y) \\
    x \ominus y &\coloneqq \RNE(x - y)
\end{align}

Rounding errors in precision-$p$ floating-point arithmetic are characterized by the \textit{unit roundoff}\footnote{The definition $\u \coloneqq 2^{-p}$ is appropriate when floating-point operations are rounded to nearest. For other rounding modes, the larger value $\u \coloneqq 2^{-(p-1)}$ is used instead.} constant $\u \coloneqq 2^{-p}$, which bounds the relative error of any individual rounded operation \cite{Rump2016}. For example, for all floating-point numbers $a$ and $b$ such that $|a + b| < (2 - \u) 2^{\emax}$, there exists $\delta \in \mathbb{R}$ satisfying
\begin{equation}
a \oplus b = (a + b)(1 + \delta) \quad \text{where} \quad |\delta| \le \u.
\end{equation}
Analogous results also hold for subtraction and other floating-point operations.

An \textit{error-free transformation} is a floating-point algorithm that computes a rounded arithmetic operation together with the exact rounding error incurred by that operation. The oldest and most important example of an error-free transformation is the M{\o}ller--Knuth $\TwoSum$ algorithm, first discovered by M{\o}ller \cite{Moller1965} and proven correct by Knuth \cite{Knuth1969}, which computes the rounding error of a floating-point sum $s \coloneqq x \oplus y$ or difference $d \coloneqq x \ominus y$.

Although the algorithm itself is straightforward, the existence of $\TwoSum$ is a highly nontrivial result. It is not obvious \textit{a priori} that the rounding error $e \coloneqq (x + y) - (x \oplus y)$ is always exactly representable as a floating-point number, let alone that it can be recovered using rounded floating-point operations. Despite the apparent simplicity of the pseudocode in Figure~\ref{fig:TwoSum}, proving the correctness of $\TwoSum$ requires lengthy case analysis \cite[Sec. 4.2.2]{Knuth1969}. Analogous error-free transformations also exist for floating-point multiplication~\cite{Veltkamp1968,Veltkamp1969,Dekker1971} and the fused multiply-add operation~\cite{Boldo2011}.

\begin{figure}[t]
    \centering
    \begin{minipage}{8.6cm}
        \begin{algorithm}[H]
            \caption{$\TwoSum(x, y)$}
            \KwIn{Two floating-point numbers $(x, y)$.}
            \KwOut{Two floating-point numbers $(s, e)$
                such that $s = x \oplus y$ and
                $e = (x + y) - (x \oplus y)$ exactly.}
            $s \coloneqq x \oplus y$ \\
            $x_{\text{eff}} \coloneqq s \ominus y$ \\
            $y_{\text{eff}} \coloneqq s \ominus x_{\text{eff}}$ \\
            $\delta_x \coloneqq x \ominus x_{\text{eff}}$ \\
            $\delta_y \coloneqq y \ominus y_{\text{eff}}$ \\
            $e \coloneqq \delta_x \oplus \delta_y$ \\
            \KwRet{$(s, e)$}
        \end{algorithm}
    \end{minipage}
    \caption{Pseudocode for the M{\o}ller--Knuth $\TwoSum$ algorithm. This algorithm can also be applied to subtraction by flipping the sign of $y$ throughout.}
    \label{fig:TwoSum}
\end{figure}

\subsection{Floating-Point Expansions}

\textit{Floating-point expansion} is a technique for representing high-precision numbers as sequences of machine-precision numbers. The basic idea is to represent a high-precision constant $C \in \mathbb{R}$ as a sequence of successive approximations:
\begin{equation}
    \begin{aligned}
        x_0 &\coloneqq \RNE(C) \\
        x_1 &\coloneqq \RNE(C - x_0) \\
        x_2 &\coloneqq \RNE(C - x_0 - x_1) \\
        &\hspace{0.7em} \vdots \\
        x_{n-1} &\coloneqq \RNE(C - x_0 - x_1 - \cdots - x_{n-2})
    \end{aligned}    
\end{equation}
Provided that no overflow or underflow occurs in this process, the final $n$-term expansion $(x_0, x_1, \dots, x_{n-1})$ approximates $C$ with precision $np + n - 1$, i.e.,
\begin{equation}
    |C - (x_0 + x_1 + \cdots + x_{n-1})| \le 2^{-(np+n-1)} |C|
\end{equation}
where $p$ denotes the underlying machine precision. To achieve the full precision of $np + n - 1$ bits, a floating-point expansion must be \textit{nonoverlapping}, i.e.,
\begin{equation} \label{eq:QDNO}
    |x_i| \le \frac{1}{2} \ulp(x_{i-1})
\end{equation}
for each $i = 1, \dots, n-1$. Here, $\ulp(x)$ denotes a \textit{unit in the last place} of $x$, i.e., the place value of its least significant mantissa bit, in accordance with Goldberg's convention~\cite{MullerUlp}. As illustrated in Figure~\ref{fig:overlap}, this property ensures that no bits of $C$ are redundantly covered by more than one component of the expansion.

\begin{figure}[t]
    \centering
    \begin{tikzpicture}[scale=0.25,decoration=brace]

        \draw (-5,10.85) node [anchor=east] {high-precision constant};
        \draw (-3,10) node [anchor=south] {$C$};
        \draw (-1.5,10) node [anchor=south] {$=$};
        \draw ( 0,10) node [anchor=south] {1};
        \draw ( 1,10) node [anchor=south] {0};
        \draw ( 2,10) node [anchor=south] {1};
        \draw ( 3,10) node [anchor=south] {.};
        \draw ( 4,10) node [anchor=south] {0};
        \draw ( 5,10) node [anchor=south] {1};
        \draw ( 6,10) node [anchor=south] {1};
        \draw ( 7,10) node [anchor=south] {1};
        \draw ( 8,10) node [anchor=south] {0};
        \draw ( 9,10) node [anchor=south] {1};
        \draw (10,10) node [anchor=south] {1};
        \draw (11,10) node [anchor=south] {0};
        \draw (12,10) node [anchor=south] {1};
        \draw (13,10) node [anchor=south] {0};
        \draw (14,10) node [anchor=south] {1};
        \draw (15,10) node [anchor=south] {1};
        \draw (16,10) node [anchor=south] {.};
        \draw (17,10) node [anchor=south] {.};
        \draw (18,10) node [anchor=south] {.};

        \draw (-5,7.75) node [anchor=east] {expansion with $|x_1| > \ulp(x_0)$};
        \draw (12,7.75) node [anchor=west] {10-bit precision};
        \draw[decorate] (-4.5,6.25) -- (-4.5,9.25);
        \draw[decorate] (11,9.25) -- (11,6.25);

        \draw (-3,7.5) node [anchor=south] {$x_0$};
        \draw (-1.5,7.5) node [anchor=south] {$=$};
        \draw ( 0,7.5) node [anchor=south] {1};
        \draw ( 1,7.5) node [anchor=south] {0};
        \draw ( 2,7.5) node [anchor=south] {1};
        \draw ( 3,7.5) node [anchor=south] {.};
        \draw ( 4,7.5) node [anchor=south] {0};
        \draw ( 5,7.5) node [anchor=south] {0};
        \draw ( 6,7.5) node [anchor=south] {0};

        \draw (-3,6) node [anchor=south] {$x_1$};
        \draw (-1.5,6) node [anchor=south] {$=$};
        \draw[blue!15] ( 2,6) node [anchor=south] {0};
        \draw[blue!15] ( 3,6) node [anchor=south] {.};
        \draw[blue!15] ( 4,6) node [anchor=south] {0};
        \draw ( 5,6) node [anchor=south] {1};
        \draw ( 6,6) node [anchor=south] {1};
        \draw ( 7,6) node [anchor=south] {1};
        \draw ( 8,6) node [anchor=south] {0};
        \draw ( 9,6) node [anchor=south] {1};
        \draw (10,6) node [anchor=south] {1};

        \draw (-5,3.75) node [anchor=east] {expansion with $|x_1| \le \ulp(x_0)$};
        \draw (14,3.75) node [anchor=west] {12-bit precision};
        \draw[decorate] (-4.5,2.25) -- (-4.5,5.25);
        \draw[decorate] (13,5.25) -- (13,2.25);

        \draw (-3,3.5) node [anchor=south] {$x_0$};
        \draw (-1.5,3.5) node [anchor=south] {$=$};
        \draw ( 0,3.5) node [anchor=south] {1};
        \draw ( 1,3.5) node [anchor=south] {0};
        \draw ( 2,3.5) node [anchor=south] {1};
        \draw ( 3,3.5) node [anchor=south] {.};
        \draw ( 4,3.5) node [anchor=south] {0};
        \draw ( 5,3.5) node [anchor=south] {1};
        \draw ( 6,3.5) node [anchor=south] {1};

        \draw (-3,2) node [anchor=south] {$x_1$};
        \draw (-1.5,2) node [anchor=south] {$=$};
        \draw[blue!15] ( 2,2) node [anchor=south] {0};
        \draw[blue!15] ( 3,2) node [anchor=south] {.};
        \draw[blue!15] ( 4,2) node [anchor=south] {0};
        \draw[blue!15] ( 5,2) node [anchor=south] {0};
        \draw[blue!15] ( 6,2) node [anchor=south] {0};
        \draw ( 7,2) node [anchor=south] {1};
        \draw ( 8,2) node [anchor=south] {0};
        \draw ( 9,2) node [anchor=south] {1};
        \draw (10,2) node [anchor=south] {1};
        \draw (11,2) node [anchor=south] {0};
        \draw (12,2) node [anchor=south] {1};

        \draw (-5,-0.25) node [anchor=east] {expansion with $|x_1| \le \frac{1}{2} \ulp(x_0)$};
        \draw (15,-0.25) node [anchor=west] {13-bit precision};
        \draw[decorate] (-4.5,-1.75) -- (-4.5,1.25);
        \draw[decorate] (14,1.25) -- (14,-1.75);

        \draw (-3,-0.5) node [anchor=south] {$x_0$};
        \draw (-1.5,-0.5) node [anchor=south] {$=$};
        \draw ( 0,-0.5) node [anchor=south] {1};
        \draw ( 1,-0.5) node [anchor=south] {0};
        \draw ( 2,-0.5) node [anchor=south] {1};
        \draw ( 3,-0.5) node [anchor=south] {.};
        \draw ( 4,-0.5) node [anchor=south] {1};
        \draw ( 5,-0.5) node [anchor=south] {0};
        \draw ( 6,-0.5) node [anchor=south] {0};

        \draw (-3,-2) node [anchor=south] {$x_1$};
        \draw (-1.5,-2) node [anchor=south] {$=$};
        \draw ( 1,-2) node [anchor=south] {$-$};
        \draw[blue!15] ( 2,-2) node [anchor=south] {0};
        \draw[blue!15] ( 3,-2) node [anchor=south] {.};
        \draw[blue!15] ( 4,-2) node [anchor=south] {0};
        \draw[blue!15] ( 5,-2) node [anchor=south] {0};
        \draw[blue!15] ( 6,-2) node [anchor=south] {0};
        \draw[blue!15] ( 7,-2) node [anchor=south] {0};
        \draw ( 8,-2) node [anchor=south] {1};
        \draw ( 9,-2) node [anchor=south] {0};
        \draw (10,-2) node [anchor=south] {0};
        \draw (11,-2) node [anchor=south] {1};
        \draw (12,-2) node [anchor=south] {0};
        \draw (13,-2) node [anchor=south] {1};
    \end{tikzpicture}
    \caption{Decomposition of a high-precision constant $C$ into overlapping and nonoverlapping floating-point expansions with $p = 6$ mantissa bits per term. Light blue digits represent a shift stored in the exponent and are not part of the mantissa. Note that the final expansion rounds $x_0$ up instead of down. In this case, $x_1$ is negative, and the mantissa of $x_1$ contains the one's complement of the corresponding digits in $C$.}
    \label{fig:overlap}
\end{figure}

Arithmetic with floating-point expansions is a delicate procedure that requires skillful use of error-free transformations to propagate rounding errors between terms. Moreover, the nonoverlapping property is easily broken and must be restored after every few operations to avoid loss of precision. Designing sequences of error-free transformations with correct error propagation and nonoverlapping semantics is a remarkably difficult problem; the literature on this subject is punctuated by refutations, corrections, and corrections to those corrections~\cite{Joldes2017,Muller2022}. Some general constructions are known, but these algorithms are far from optimal, particularly when the number of inputs is small~\cite{Collange2016,FPHandbook}.

In principle, a nonoverlapping floating-point expansion can contain up to $\lceil (\emax - \emin + p) / (p + 1) \rceil$ terms before underflow occurs, causing all subsequent terms to round down to zero. However, in practice, it is more common to use fixed-length floating-point expansions consisting of two, three, or four terms~\cite{XBLAS,Lauter2005,CRLibM,COREMATH,QD}. These fixed-length expansions are called \textit{double-word}, \textit{triple-word}, \textit{quadruple-word} or \textit{double-double}, \textit{triple-double}, \textit{quad-double} representations. The latter names are used when the underlying machine format is IEEE \textsf{binary64} (double precision), but most algorithms for double-double, triple-double, and quad-double arithmetic also work for other underlying formats.

\section{Floating-Point Accumulation Networks} \label{sec:FPANs}

In this section, we formally define floating-point accumulation networks as a class of branch-free floating-point algorithms using a graphical notation inspired by sorting networks~\cite{Knuth1973}. \\[-\baselineskip]

\begin{definition}
    A \textbf{floating-point accumulation network} (\textbf{FPAN}) is a diagram consisting of horizontal \textbf{wires} and vertical \textbf{$\TwoSum$ gates} that connect exactly two wires. Each wire may optionally be terminated by the symbol $\boxtimes$, indicating that it is \textbf{discarded}.
\end{definition}
\begin{figure}[H]
    \centering
    \begin{tikzpicture}[scale=0.75]
        \draw (0.25,2) node [anchor=east] {$x$} -- (1.75,2) node [anchor=west] {$s$};
        \draw (0.25,1) node [anchor=east] {$y$} -- (1.75,1) node [anchor=west] {$e$};
        \draw (1,1) circle (0.25);
        \draw (1,2) circle (0.25);
        \draw (1,0.75) -- (1,2.25);
        \draw (2.5,1.5) node [anchor=west] {$(s, e) \coloneqq \TwoSum(x, y)$};
        \draw (8.25,2) node [anchor=east] {$x$} -- (9.75,2) node [anchor=west] {$s$};
        \draw (8.25,1) node [anchor=east] {$y$} -- (9.75,1);
        \draw (9,1) circle (0.25);
        \draw (9,2) circle (0.25);
        \draw (9,0.75) -- (9,2.25);
        \draw (9.75,0.75) rectangle (10.25,1.25);
        \draw (9.75,0.75) -- (10.25,1.25);
        \draw (9.75,1.25) -- (10.25,0.75);
        \draw (10.5,1.5) node [anchor=west] {$s \coloneqq x \oplus y$};
    \end{tikzpicture}
    \caption{Gate representations of the floating-point sum and $\TwoSum$ operations. In our notation, $x \oplus y$ is treated as a special case of $\TwoSum(x, y)$ with the error term discarded. Note that the inputs are unordered (i.e., $\TwoSum(x, y) = \TwoSum(y, x)$) but the order of the outputs is significant, with larger-magnitude outputs on top.}
    \label{fig:TwoSumGate}
\end{figure}
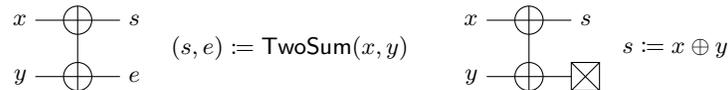

An FPAN with $n$ wires, of which $k$ are discarded, represents the following algorithm with $n$ floating-point inputs and $n-k$ floating-point outputs. Each input value $(x_1, x_2, \dots, x_n)$ enters on the left-hand side of each wire, ordered top-to-bottom unless otherwise specified by explicit labels. The values flow left-to-right along the wires, and whenever two values $(x_i, x_j)$ encounter a $\TwoSum$ gate, they are updated by $(x_i, x_j) \gets \TwoSum(x_i, x_j)$ as specified in Figure~\ref{fig:TwoSumGate}. After all $\TwoSum$ gates have been executed, all values on wires that are not discarded are returned in top-to-bottom order. To illustrate this definition, Figure~\ref{fig:add2} presents equivalent pseudocode and network diagram representations of Dekker's \textsf{add2} algorithm, the first algorithm ever proposed for double-double addition~\cite{Dekker1971}.

\begin{figure}[t]
    \begin{minipage}{0.5\textwidth}
        \centering
        \begin{algorithm}[H]
            \caption{$\mathrlap{\mathsf{add2}((x_0, x_1), (y_0, y_1))}$}
            \KwIn{nonoverlapping expansions $(x_0, x_1)$ and $(y_0, y_1)$.}
            \KwOut{nonoverlapping expansion $(z_0, z_1)$ for $x + y$.}
            $(s_0, s_1) \coloneqq \TwoSum(x_0, y_0)$ \\
            $t \coloneqq x_1 \oplus y_1$ \\
            $u \coloneqq s_1 \oplus t$ \\
            $(z_0, z_1) \coloneqq \TwoSum(s_0, u)$ \\
            \KwRet{$(z_0, z_1)$}
        \end{algorithm}
    \end{minipage}\begin{minipage}{0.5\textwidth}
        \centering
        \begin{tikzpicture}[scale=0.7]
            \draw (0,4) node [anchor=east] {$x_0$} -- (6,4) node [anchor=west] {$z_0$};
            \draw (0,3) node [anchor=east] {$y_0$} -- (6,3) node [anchor=west] {$z_1$};
            \draw (0,2) node [anchor=east] {$x_1$} -- (3.75,2);
            \draw (0,1) node [anchor=east] {$y_1$} -- (1.75,1);
    
            \draw (1,4) circle (0.25);
            \draw (1,3) circle (0.25);
            \draw (1,4.25) -- (1,2.75);
    
            \draw (1,2) circle (0.25);
            \draw (1,1) circle (0.25);
            \draw (1,2.25) -- (1,0.75);
    
            \draw (1.75,0.75) rectangle (2.25,1.25);
            \draw (1.75,0.75) -- (2.25,1.25);
            \draw (1.75,1.25) -- (2.25,0.75);
    
            \draw (3,3) circle (0.25);
            \draw (3,2) circle (0.25);
            \draw (3,3.25) -- (3,1.75);
    
            \draw (3.75,1.75) rectangle (4.25,2.25);
            \draw (3.75,1.75) -- (4.25,2.25);
            \draw (3.75,2.25) -- (4.25,1.75);
    
            \draw (5,4) circle (0.25);
            \draw (5,3) circle (0.25);
            \draw (5,4.25) -- (5,2.75);
        \end{tikzpicture}
    \end{minipage}
    \caption{Pseudocode and network diagram representations of Dekker's \textsf{add2} algorithm. Note that the intermediate variables $s_0, s_1, t, u$ are unnamed in the network diagram representation, implicitly represented by the wire segments in between $\TwoSum$ gates.}
    \label{fig:add2}
\end{figure}

The purpose of an FPAN is to compute a nonoverlapping floating-point expansion of the exact value of the sum of its inputs. By the defining property of the $\TwoSum$ algorithm, this value is invariant to the application of a $\TwoSum$ gate to any two wires; it is only ever changed by discarding a wire. Therefore, to prove the correctness of an FPAN, it suffices to show the following:
\begin{enumerate}
    \item The non-discarded outputs, denoted by $(a_1, a_2, \dots, a_{n-k})$, must satisfy a \textit{nonoverlapping invariant}, such as $|a_{i+1}| \le \frac{1}{2} \ulp(a_i)$ for all $i = 1, \dots, n-k-1$.
    \item The discarded outputs, denoted by $(b_1, b_2, \dots, b_k)$, must satisfy an \textit{error bound}, such as $|b_i| \le \frac{1}{2} \ulp(a_{n-k})$ for all $i = 1, \dots, k$.
\end{enumerate}
These correctness properties are based on the notion of nonoverlapping defined by Equation~\eqref{eq:QDNO}, which is used in several notable software libraries, such as Bailey's QD library~\cite{QD}. However, a variety of nonoverlapping invariants and error bounds have been adopted by floating-point researchers in different situations. We briefly survey these notions below; see \cite[Sec.\ 14.2]{FPHandbook} for more details.

\begin{definition} \label{def:dominates}
    Let $x$ and $y$ be precision-$p$ floating-point numbers. Let $e_x$ and $e_y$ denote the exponents of $x$ and $y$, respectively, and let $\mathsf{ntz}_x$ denote the number of trailing zeros in the mantissa of $x$.
    \begin{itemize}
        \item We say $x$ \textbf{$\mathcal{S}$-dominates} $y$, denoted by $x \succ_{\mathcal{S}} y$, if $y = \pm\mathtt{0.0}$ or $x$ and $y$ are both nonzero and $e_x - e_y \ge p - \mathsf{ntz}_x$.
        \item We say $x$ \textbf{$\mathcal{P}$-dominates} $y$, denoted by $x \succ_{\mathcal{P}} y$, if $y = \pm\mathtt{0.0}$ or $x$ and $y$ are both nonzero and $e_x - e_y \ge p$.
        \item We say $x$ \textbf{$\ulp$-dominates} $y$, denoted by $x \succ_{\ulp} y$, if $|y| \le \ulp(x)$.
        \item We say $x$ \textbf{$\mathsf{QD}$-dominates} $y$, denoted by $x \succ_{\mathsf{QD}} y$, if $|y| \le \frac{1}{2}\ulp(x)$.
    \end{itemize}
\end{definition}
\begin{definition} \label{def:nonoverlapping}
    A floating-point expansion $(x_0, x_1, \dots, x_{n-1})$ is \textbf{$\mathcal{S}$-nonoverlapping} if $x_{k-1} \succ_{\mathcal{S}} x_k$ for all $k = 1, \dots, n-1$. We similarly define \textbf{$\mathcal{P}$-nonoverlapping}, \textbf{$\ulp$-nonoverlapping}, and \textbf{$\mathsf{QD}$-nonoverlapping} expansions with $\succ_{\mathcal{P}}$, $\succ_{\ulp}$, or $\succ_{\mathsf{QD}}$ replacing $\succ_{\mathcal{S}}$ in the preceding definition, respectively.
\end{definition}
We will return to this topic in Section~\ref{sec:Abstraction}, where we present a general abstraction that subsumes nonoverlapping invariants and error bounds in many forms.

Dekker's \textsf{add2} algorithm is notable for having very weak correctness guarantees. Assuming that the inputs $(x_0, x_1)$ and $(y_0, y_1)$ are $\mathcal{P}$-nonoverlapping, the sum $(z_0, z_1)$ computed by \textsf{add2} satisfies the following relative error bound:
\begin{equation}
    \frac{|(z_0 + z_1) - (x_0 + x_1 + y_0 + y_1)|}{|x_0 + x_1 + y_0 + y_1|} \le 4\u^2 \frac{|x_0 + x_1| + |y_0 + y_1|}{|x_0 + x_1 + y_0 + y_1|}
\end{equation}
Although this error bound is reasonably tight when $(x_0, x_1)$ and $(y_0, y_1)$ have the same sign, it can be extremely loose when $(x_0, x_1)$ and $(y_0, y_1)$ have different signs, in which case $|x_0 + x_1| + |y_0 + y_1|$ can be orders of magnitude larger than $|x_0 + x_1 + y_0 + y_1|$. Joldes, Muller, and Popescu~\cite{Joldes2017} identified example inputs for which \textsf{add2} computes sums with 100\% relative error, i.e., zero accurate bits compared to the true value of $x_0 + x_1 + y_0 + y_1$.

This observation highlights the surprising difficulty of computing accurate floating-point sums, even for as few as four inputs. At first glance, the network diagram shown in Figure~\ref{fig:add2} may not appear to have any obvious deficiencies. Indeed, when interpreted as a sorting network, this diagram gives a correct algorithm for partially sorting four inputs satisfying the preconditions $x_0 > x_1$ and $y_0 > y_1$. However, there are two key differences that make floating-point accumulation harder than sorting. First, the outputs of an FPAN not only need to be sorted by magnitude; they also require a degree of mutual separation to satisfy nonoverlapping invariants and error bounds. Second, unlike a comparator which merely reorders its inputs, a $\TwoSum$ gate actually modifies its inputs, potentially introducing new overlap and ordering issues with every operation.

% \subsection{Examples}

\paragraph{Example: KBN Summation.} Kahan--Babu\v{s}ka--Neumaier (KBN) summation is an algorithm that uses $\TwoSum$ to compute floating-point sums with a running compensation term to improve the accuracy of the final result~\cite{Kahan1965,Babuska1972,Neumaier1974}. This technique is frequently used in floating-point programs and is implemented in both the Python and Julia standard libraries. In particular, Python's built-in \texttt{sum()} function uses KBN summation when given floating-point inputs~\cite{PythonDocs}.

In our graphical notation, the KBN algorithm is written as an FPAN with a double staircase structure illustrated in Figure~\ref{fig:KBN}. The first staircase computes the na\"ive floating-point sum of the inputs, while the second staircase computes the running compensation term used to correct the na\"ive sum.

\begin{figure}[t]
    \centering
    \begin{tikzpicture}[scale=0.5]
        \draw (0,5) node [anchor=east] {$x_5$} -- (13,5) node [anchor=west] {$s$};
        \draw (0,4) node [anchor=east] {$x_4$} -- (11.75,4);
        \draw (0,3) node [anchor=east] {$x_3$} -- (9.75,3);
        \draw (0,2) node [anchor=east] {$x_2$} -- (7.75,2);
        \draw (0,1) node [anchor=east] {$x_1$} -- (5.75,1);

        \draw (1,2) circle (0.25);
        \draw (1,1) circle (0.25);
        \draw (1,2.25) -- (1,0.75);

        \draw (3,3) circle (0.25);
        \draw (3,2) circle (0.25);
        \draw (3,3.25) -- (3,1.75);

        \draw (5,4) circle (0.25);
        \draw (5,3) circle (0.25);
        \draw (5,4.25) -- (5,2.75);

        \draw (5,2) circle (0.25);
        \draw (5,1) circle (0.25);
        \draw (5,2.25) -- (5,0.75);

        \draw (5.75,0.75) rectangle (6.25,1.25);
        \draw (5.75,0.75) -- (6.25,1.25);
        \draw (5.75,1.25) -- (6.25,0.75);

        \draw (7,5) circle (0.25);
        \draw (7,4) circle (0.25);
        \draw (7,5.25) -- (7,3.75);

        \draw (7,3) circle (0.25);
        \draw (7,2) circle (0.25);
        \draw (7,3.25) -- (7,1.75);

        \draw (7.75,1.75) rectangle (8.25,2.25);
        \draw (7.75,1.75) -- (8.25,2.25);
        \draw (7.75,2.25) -- (8.25,1.75);

        \draw (9,4) circle (0.25);
        \draw (9,3) circle (0.25);
        \draw (9,4.25) -- (9,2.75);

        \draw (9.75,2.75) rectangle (10.25,3.25);
        \draw (9.75,2.75) -- (10.25,3.25);
        \draw (9.75,3.25) -- (10.25,2.75);

        \draw (11,5) circle (0.25);
        \draw (11,4) circle (0.25);
        \draw (11,5.25) -- (11,3.75);

        \draw (11.75,3.75) rectangle (12.25,4.25);
        \draw (11.75,3.75) -- (12.25,4.25);
        \draw (11.75,4.25) -- (12.25,3.75);
    \end{tikzpicture}
    \caption{Network diagram for Kahan--Babu\v{s}ka--Neumaier summation applied to five inputs. This double staircase accumulation pattern generalizes to any number of inputs.}
    \label{fig:KBN}
\end{figure}
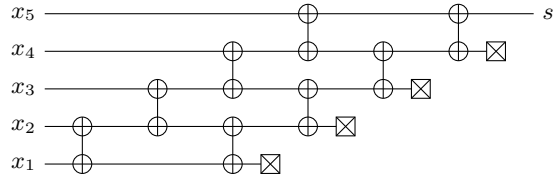

% \paragraph{Triple-double and Quad-double Arithmetic.}

% [\todo{If space allows, give additional examples of FPANs occurring in algorithms for triple-double and quad-double arithmetic.}]

\section{Abstraction} \label{sec:Abstraction}

Existing methods for floating-point verification are ill-suited for FPANs. On one hand, techniques based on interval analysis or projection from real arithmetic are too imprecise to reason about error-free transformations. On the other hand, bit-blasting produces enormous satisfiability problems that are only tractable for tiny mantissa widths. Verifying FPANs requires a technique that can precisely specify the magnitude and shape of a floating-point sum without reasoning through the value of every bit. To this end, we introduce a new abstract domain for floating-point reasoning, which we call the SELTZO abstraction.

\begin{definition} \label{def:SELTZO}
Let $x$ be a floating-point number. The \textbf{sign-exponent leading-trailing zeros-ones (SELTZO) abstraction} of $x$ is the ordered 6-tuple $(s_x,\allowbreak e_x,\allowbreak \nlz_x,\allowbreak \nlo_x,\allowbreak \ntz_x,\allowbreak \nto_x)$ consisting of:
\begin{enumerate}
    \item the sign bit $s_x$ and exponent $e_x$ of $x$;
    \item the counts $(\nlz_x, \nlo_x)$ of leading zeros and ones, respectively, in the mantissa of $x$, ignoring the implicit leading bit; and
    \item the counts $(\ntz_x, \nto_x)$ of trailing zeros and ones, respectively, in the mantissa of $x$, ignoring the implicit leading bit.
\end{enumerate}
\end{definition}
For example, the SELTZO abstraction of $-1.0010011111_2 \times 2^7$ is $(1, 7, 2, 0, 0, 5)$, and the SELTZO abstraction of $+1.1111111111_2 \times 2^{-2}$ is $(0, -2, 0, 10, 0, 10)$. (Recall that the implicit leading bit is ignored when computing $\nlz_x$ and $\nlo_x$.)
When $x = \pm\mathtt{0.0}$, we set $e_x \coloneqq \emin - 1$ for consistency with IEEE 754 representation. Because we assume all floating-point numbers to be normalized or zero, $x = \pm\mathtt{0.0}$ is the only value we consider to have the property $e_x < \emin$.

Our definition of the SELTZO abstraction is motivated by several design considerations that we will explore in the remainder of this section.

\begin{itemize}
    \item The SELTZO abstraction precisely captures all of the nonoverlapping invariants specified in Definitions \ref{def:dominates} and \ref{def:nonoverlapping} using linear inequalities in the precision $p$ and the variables $(s_x, e_x, \nlz_x, \nlo_x, \ntz_x, \nto_x)$.
    \item The variables of the SELTZO abstraction are particularly well-behaved under the floating-point sum and $\TwoSum$ operations, which interact in a highly predictable fashion with long stretches of leading or trailing zeros and ones.
    \item As shown by the worst-case inputs found by Joldes, Muller, and Popescu~\cite{Joldes2017}, FPANs tend to exhibit pathological behavior near powers of two, which mark the boundaries between different exponent values. These are precisely the floating-point numbers with many leading mantissa zeros or ones.
\end{itemize}

Note that distinct SELTZO abstract values correspond to disjoint sets of concrete floating-point values. In particular, the bit counts $(\nlz_x, \nlo_x, \ntz_x, \nto_x)$ are \textit{not} cumulative; $\ntz_x = 3$ specifies concrete values with \textit{exactly} three trailing zeros, no more. As these bit counts increase, the number of concrete values that correspond to a particular abstract value drops exponentially, making the SELTZO abstraction more precise near exponent boundaries.

\subsection{SELTZO Correctness Properties}

To verify FPANs, the SELTZO abstraction must be able to express necessary and sufficient conditions for the correctness properties specified in Definitions \ref{def:dominates} and \ref{def:nonoverlapping}. The following proposition writes these properties as linear inequalities in the SELTZO variables $(s_x, e_x, \nlz_x, \nlo_x, \ntz_x, \nto_x)$ and the precision $p$.

\begin{proposition} \label{prop:nonoverlapping}
    Let $x$ and $y$ be precision-$p$ floating-point numbers. Using the notation of Definition~\ref{def:SELTZO}:  \\[-1.5\baselineskip]
    \begin{enumerate}
        \item $x \succ_{\mathcal{S}} y$ if and only if $y = \pm\mathtt{0.0}$ or $e_x - e_y \ge p - \ntz_x$.
        \item $x \succ_{\mathcal{P}} y$ if and only if $y = \pm\mathtt{0.0}$ or $e_x - e_y \ge p$.
        \item $x \succ_{\ulp} y$ if and only if $y = \pm\mathtt{0.0}$, $e_x - e_y > p - 1$, or $e_x - e_y = p - 1$ and $\ntz_y = p - 1$.
        \item $x \succ_{\mathsf{QD}} y$ if and only if $y = \pm\mathtt{0.0}$, $e_x - e_y > p$, or $e_x - e_y = p$ and $\ntz_y = p - 1$.
    \end{enumerate}
\end{proposition}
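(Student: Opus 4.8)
The plan is to prove each of the four equivalences by carefully translating the value-based definitions of the dominance relations in Definition~\ref{def:dominates} into statements about exponents and mantissa bit-patterns, using the SELTZO encoding. Throughout, I would fix normalized precision-$p$ floating-point numbers $x$ and $y$, write $x = \pm 1.m_1 m_2 \dots m_{p-1} \times 2^{e_x}$ and similarly for $y$, and dispose of the trivial case $y = \pm\mathtt{0.0}$ immediately (in that case all four relations hold by definition, and the right-hand sides are all satisfied by their first disjunct), so that in the main argument both $x$ and $y$ are nonzero.

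For parts (1) and (2), the work is essentially bookkeeping. For $\succ_{\mathcal P}$, the definition literally states $e_x - e_y \ge p$, so part (2) is immediate from the case split. For $\succ_{\mathcal S}$, the definition says $e_x - e_y \ge p - \mathsf{ntz}_x$; here $\mathsf{ntz}_x$ in Definition~\ref{def:dominates} counts trailing zeros of the mantissa of $x$, which I must reconcile with $\ntz_x$ of the SELTZO abstraction, which ignores the implicit leading bit. The only subtlety is the edge case where $x$ is a power of two (mantissa bits all zero): then Definition~\ref{def:SELTZO} gives $\ntz_x = p-1$ (all $p-1$ explicit bits are zero), and one checks that the condition $e_x - e_y \ge p - \ntz_x = 1$ correctly captures $\succ_{\mathcal S}$ in that case, since the least significant bit of $x$ has place value $2^{e_x - (p-1)}$. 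In general the least significant nonzero bit of $x$ sits at place value $2^{e_x - (p-1-\ntz_x)}$, and $x \succ_{\mathcal S} y$ should mean $y$ is small enough not to overlap that bit — I would verify this matches $e_x - e_y \ge p - \ntz_x$ exactly.

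For parts (3) and (4), which characterize $\succ_{\ulp}$ and $\succ_{\mathsf{QD}}$, the interesting content is the boundary case, and this is where I expect the main obstacle to lie. Recall $\ulp(x) = 2^{e_x - (p-1)}$ for a normalized $x$. The relation $x \succ_{\ulp} y$ means $|y| \le \ulp(x) = 2^{e_x-(p-1)}$. If $e_x - e_y > p-1$, then $|y| < 2^{e_y+1} \le 2^{e_x-(p-1)}$, so the relation holds. If $e_x - e_y < p-1$, then $|y| \ge 2^{e_y} > 2^{e_x-(p-1)}$, so it fails. The delicate case is $e_x - e_y = p-1$: then $|y| \le \ulp(x)$ holds if and only if $y = \pm 2^{e_y} = \pm 2^{e_x-(p-1)}$ exactly, i.e., $y$ is a power of two, which in SELTZO terms is precisely $\ntz_y = p-1$ (equivalently $\nlz_y = p-1$, or $\mathsf{nlo}_y = \mathsf{nto}_y = 0$ — I would just use $\ntz_y = p-1$ as stated). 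For part (4), $\succ_{\mathsf{QD}}$ means $|y| \le \tfrac12\ulp(x) = 2^{e_x-p}$, and the identical analysis shifts the boundary by one: the relation holds automatically for $e_x - e_y > p$, fails for $e_x - e_y < p$, and for $e_x - e_y = p$ holds iff $y = \pm 2^{e_y}$, i.e., $\ntz_y = p-1$.

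The one genuine care point — and the reason I flag the boundary case as the main obstacle rather than a triviality — is getting the strict-versus-nonstrict inequalities and the exact place-value exponents right, since an off-by-one in the exponent arithmetic would silently shift which disjunct is needed; I would double-check each boundary by instantiating a small concrete example (e.g., $p = 6$ as in Figure~\ref{fig:overlap}) and confirming that, for instance, $x = 1.01_2 \times 2^0$ with $y = 1.0_2 \times 2^{-5}$ satisfies $|y| = 2^{-5} = \ulp(x)$ and indeed $\ntz_y = 5 = p-1$, whereas any $y$ with $e_y = -5$ that is not a power of two exceeds $\ulp(x)$. Once the boundary cases are pinned down, assembling the four biconditionals from the three-way exponent case split is routine.
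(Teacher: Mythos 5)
Your proposal is correct and mirrors the paper's own proof: dispose of the $y = \pm\mathtt{0.0}$ case, note parts (1) and (2) are immediate from Definition~\ref{def:dominates}, and for parts (3) and (4) split $|y| \le \ulp(x)$ (resp. $\tfrac{1}{2}\ulp(x)$) into a strict-inequality case (equivalent to an exponent gap) and an equality case (equivalent to $y$ being a power of two, i.e., $\ntz_y = p-1$). The only cosmetic difference is that you organize the boundary analysis as a three-way case split on $e_x - e_y$, while the paper decomposes $\le$ into $<$ or $=$; these are the same argument.
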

\begin{proof}
    Claims 1 and 2 follow immediately from the definitions of $\succ_{\mathcal{S}}$ and $\succ_{\mathcal{P}}$. To prove Claim 3, suppose $x$ and $y$ are both nonzero. Then $\ulp(x) = 2^{e_x - (p-1)}$, and the condition $|y| \le \ulp(x)$ is satisfied if and only if $|y| < 2^{e_x - (p-1)}$ or $|y| = 2^{e_x - (p-1)}$. The former is equivalent to $e_y < e_x - (p - 1)$, while the latter is equivalent to $e_y = e_x - (p - 1)$ and $\ntz_y = p - 1$ (i.e., all mantissa bits of $y$ are zero, excluding the implicit leading bit). Claim 4 is proven by an analogous argument with $\ulp(x)$ replaced by $\frac{1}{2}\ulp(x) = 2^{e_x - p}$. \qed
\end{proof}

\subsection{SELTZO Abstraction of $\TwoSum$}

In addition to expressing correctness properties, the SELTZO abstraction must also precisely capture the behavior of the $\TwoSum$ operation. In particular, it must be able to state relations between abstract inputs and outputs that rule out impossible input-output pairs. Relations of this type allow us to verify FPANs by ruling out the possibility of an invalid output.

The following propositions constrain the possible outputs of the $\TwoSum$ operation using linear inequalities in the SELTZO variables.

\begin{proposition} \label{prop:identity}
    Let $s$ and $e$ be floating-point numbers in the \textsf{binary16}, \textsf{bfloat16}, \textsf{binary32}, \textsf{binary64}, or \textsf{binary128} formats (defined in Table~\ref{tab:ieeeformats}). Using the notation of Definition~\ref{def:SELTZO}, $(s, e)$ is a fixed point of $\TwoSum$ if and only if at least one of the following conditions holds:
    \begin{enumerate}
        \item $e = \pm\mathtt{0.0}$.
        \item $e_s - e_e > p + 1$.
        \item $e_s - e_e = p + 1$ and one or more of the following sub-conditions holds:
        \begin{enumerate}
            \item $s_s = s_e$.
            \item $\mathsf{ntz}_s < p - 1$ (i.e., $s$ is not a power of $2$).
            \item $\mathsf{ntz}_e = p - 1$ (i.e., $e$ is a power of $2$).
        \end{enumerate}
        \item $e_s - e_e = p$, $\mathsf{ntz}_e = p - 1$, $\mathsf{ntz}_s \ge 1$, and one or more of the following sub-conditions holds:
        \begin{enumerate}
            \item $s_s = s_e$.
            \item $\mathsf{ntz}_s < p - 1$.
        \end{enumerate}
    \end{enumerate}
\end{proposition}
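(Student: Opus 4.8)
The plan is to reduce the fixed-point equation $\TwoSum(s, e) = (s, e)$ to the single scalar condition $s \oplus e = s$. Once $s \oplus e = s$, the error output of $\TwoSum$ is $(s + e) - (s \oplus e) = (s + e) - s = e$ automatically, and the ``larger-on-top'' output convention is respected because the rounding error of a nearest sum satisfies $|(s+e)-(s\oplus e)| \le \frac{1}{2}\ulp(s\oplus e) < |s\oplus e|$ whenever $s\oplus e \ne \pm\mathtt{0.0}$ (and the error is $\pm\mathtt{0.0}$ when $s \oplus e$ is). The degenerate cases are then immediate: if $e = \pm\mathtt{0.0}$ then $s \oplus e = s$ trivially, which is condition~1; if $s = \pm\mathtt{0.0}$ and $e \ne \pm\mathtt{0.0}$ then $s \oplus e = e \ne s$, and since $e_s = \emin - 1$ forces $e_s - e_e < 0$, none of conditions~1--4 can hold, so both sides of the biconditional are false. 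Hence we may assume $s$ and $e$ are nonzero, and by $\RNE(-x) = -\RNE(x)$ we may assume $s > 0$.

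Next I would characterize $\RNE(s + e) = s$ geometrically. Write $u \coloneqq \ulp(s) = 2^{e_s - p + 1}$, so $\frac{1}{2}\ulp(s) = 2^{e_s-p}$, and split on whether $s$ is a power of two. If $s$ is not a power of two (equivalently $\ntz_s < p-1$), the floating-point neighbors of $s$ are $s \pm u$ with midpoints $s \pm u/2$, so $\RNE(s+e) = s$ holds exactly when $|e| < u/2$, or when $|e| = u/2$ and the round-to-even tie resolves toward $s$; since two consecutive floats of the same exponent differ in their last mantissa bit, the tie resolves toward $s$ precisely when $s$ has last bit $0$, i.e.\ $\ntz_s \ge 1$. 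If $s = 2^{e_s}$ is a power of two ($\ntz_s = p-1$), the rounding grid is asymmetric: the lower neighbor is $s - u/2$ and the upper neighbor is $s + u$, with midpoints $s - u/4$ and $s + u/2$; since $s$ has last bit $0$ while both neighbors have last bit $1$, both ties resolve toward $s$, so $\RNE(s+e) = s$ holds exactly when $-u/4 \le e \le u/2$.

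The remaining step is to translate these magnitude conditions into SELTZO variables, using the elementary facts that a nonzero float $e$ has $|e| = 2^{k}$ iff $e_e = k$ and $\ntz_e = p-1$, and $|e| < 2^k$ iff $e_e \le k-1$. Taking $k = e_s - p$ (the exponent of $u/2$) and $k = e_s - p - 1$ (the exponent of $u/4$), the non-power-of-two case becomes: fixed iff $e_s - e_e \ge p+1$, or $e_s - e_e = p$ with $\ntz_e = p-1$ and $\ntz_s \ge 1$. The power-of-two case becomes: fixed iff ($e > 0$ and [$e_s - e_e \ge p+1$, or $e_s - e_e = p$ with $\ntz_e = p-1$]) or ($e < 0$ and [$e_s - e_e \ge p+2$, or $e_s - e_e = p+1$ with $\ntz_e = p-1$]). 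Merging the two cases, and using that $s_s = s_e$ is equivalent to $e > 0$ under $s > 0$, that $\ntz_s < p-1$ characterizes the non-power-of-two case, and that $\ntz_s \ge 1$ is automatic when $s$ is a power of two (as $p \ge 2$ for every format in Table~\ref{tab:ieeeformats}), recovers conditions~1--4 verbatim, which proves the biconditional.

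The main obstacle I anticipate is bookkeeping rather than conceptual difficulty: getting the power-of-two case right, where the grid is coarser above $s$ than below it, so the set of $e$ rounding back to $s$ is the \emph{asymmetric} interval $[-u/4,\, u/2]$, and then packaging this asymmetry together with the non-power-of-two case into the single uniform disjunction with sub-conditions (a) $s_s = s_e$ and (b) $\ntz_s < p-1$. The ``$\ntz_s \ge 1$'' hypothesis of condition~4 is the subtlest point: it is a genuine restriction only in the non-power-of-two case, where a half-ulp perturbation of an odd $s$ ties away from $s$, whereas it is free when $s$ is a power of two. As an independent check, for each of the five named formats the statement is a decidable \texttt{QF{\textunderscore}FP} query, so the case analysis can also be confirmed by an SMT solver.
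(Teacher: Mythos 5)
Your proof is sound but takes a genuinely different route from the paper's: the paper's entire proof of Proposition~\ref{prop:identity} is a one-line appeal to ``direct verification with a floating-point SMT solver,'' i.e.\ a black-box \texttt{QF{\textunderscore}FP} decidability check run once per named format, with no human-readable argument given. Your proof, by contrast, is an analytical derivation. You correctly observe that the fixed-point condition reduces to $s \oplus e = s$ (the error output is then $e$ automatically by the defining property of $\TwoSum$), characterize $\RNE(s+e) = s$ geometrically via the rounding grid around $s$ --- splitting on whether $s$ is a power of two, accounting for the asymmetric ulp spacing below a power of two, and resolving the half-ulp ties via round-to-even --- and then translate the resulting magnitude inequalities into the SELTZO variables, which merges cleanly into conditions 1--4. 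I have checked the case merge and it is correct: in the non-power-of-two case sub-condition~(b) of conditions~3 and~4 is automatic; in the power-of-two same-sign case sub-condition~(a) is automatic and $\ntz_s \ge 1$ is free; in the power-of-two opposite-sign case only condition~2 and condition~3(c) can apply, exactly matching your $\lvert e\rvert \le u/4$ analysis. What your route buys is insight into \emph{why} each alternative appears and generality to any $p \ge 2$, whereas the paper must re-run the solver per format. What the paper's SMT route buys is automatic coverage of boundary cases your geometric picture idealizes: you implicitly assume the float $s - u/2$ exists below a power-of-two $s$, which fails at $e_s = \emin$ under the paper's normalized-or-zero convention (fortunately this is harmless since normalized $\lvert e\rvert \ge 2^{\emin}$ then violates the tie bound vacuously), and you do not address the possibility that $e$ is subnormal, which the direct SMT query necessarily covers. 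If you want this to stand as a self-contained replacement for the SMT check, you should spell out these boundary cases explicitly.
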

We prove Proposition~\ref{prop:identity} by direct verification with a floating-point SMT solver.

\begin{proposition} \label{prop:outputs}
    A pair of floating-point numbers $(s, e)$ is a possible output of $\TwoSum$ if and only if it satisfies the hypotheses of Proposition~\ref{prop:identity}.
\end{proposition}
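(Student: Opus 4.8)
The claim is that the set of possible $\TwoSum$ outputs coincides exactly with the set of $\TwoSum$ fixed points characterized in Proposition~\ref{prop:identity}. Let me think about both directions.

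First, the easy direction: if $(s,e)$ is a fixed point of $\TwoSum$, then $(s,e)$ is trivially a possible output — it is the output of $\TwoSum(s,e)$. So every fixed point is a possible output, giving one inclusion for free.

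The hard direction: if $(s,e)$ is a possible output of $\TwoSum$, i.e., $(s,e) = \TwoSum(x,y)$ for some floating-point $x,y$, then $(s,e)$ satisfies the Prop. 10 conditions. The natural approach: I need to show $\TwoSum(s,e) = (s,e)$, i.e., that applying $\TwoSum$ to a $\TwoSum$ output leaves it unchanged. This is essentially an idempotence property: $\TwoSum \circ \TwoSum = \TwoSum$. This is a known fact about $\TwoSum$ — the output pair $(s,e)$ satisfies $s = s \oplus e$ and $e = (s+e) - (s \oplus e)$, because $|e| \le \frac{1}{2}\ulp(s)$ (the standard $\TwoSum$ error bound) forces $s \oplus e = s$ (since $e$ is small enough that adding it to $s$ rounds back to $s$... with the caveat about the round-to-even tie case when $|e|$ is exactly half an ulp and the last bit of $s$ is odd — but then $e$ being a power of two and $s$ having its last bit set is exactly sub-condition 3(b)/3(c) and 4(b), which is why those appear in Prop. 10). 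Once $s \oplus e = s$, then $e_{\text{eff}}$ in the $\TwoSum$ body recovers $e$ exactly, and $\TwoSum(s,e) = (s,e)$.

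So the plan is:
\begin{enumerate}
    \item Invoke the standard correctness property of $\TwoSum$: for any output $(s,e) = \TwoSum(x,y)$, we have $s = x \oplus y$ and $e = (x+y) - s$ exactly, and moreover $|e| \le \frac{1}{2}\ulp(s)$ (equivalently $e$ fits in the bits "below" $s$), with the stronger statement $e = \pm\mathtt{0.0}$ or $e_s - e_e \ge p$ or ($e_s - e_e = p+1$...) — i.e., the relationship between the exponents and trailing structure of $s$ and $e$ is constrained. This is where I'd cite Knuth or Boldo.
    \item Show that any such $(s,e)$ is a fixed point of $\TwoSum$, i.e., $\TwoSum(s,e) = (s,e)$. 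This reduces to showing $s \oplus e = s$ (then the rest of the $\TwoSum$ body unwinds to return $(s,e)$). The subtle part is the round-to-nearest-ties-to-even tie case, handled by the fact that whenever $|e| = \frac{1}{2}\ulp(s)$, $e$ is a power of two — this is a property of $\TwoSum$ outputs — and then either $s$'s last bit is $0$ (so it rounds to $s$) or we're in a configuration already excluded from/included in Prop. 10's conditions.
    \item Conversely, by Step 1 of this proof (the easy direction above), every fixed point is an output. Combined with Step 2, the two sets are equal, and by Proposition~\ref{prop:identity} the fixed points are exactly those $(s,e)$ satisfying conditions 1--4.
\end{enumerate}

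I expect the main obstacle to be Step 2, specifically pinning down the exact relationship between "$(s,e)$ is a $\TwoSum$ output" and "$(s,e)$ is a $\TwoSum$ fixed point" at the boundary cases — when $e_s - e_e = p$ or $p+1$ and $e$ is near a power of two. These are precisely the cases distinguished by sub-conditions 3(a)--3(c) and 4(a)--4(b) in Proposition~\ref{prop:identity}, and getting them right requires careful reasoning about when adding a power-of-two error term back to $s$ triggers a carry or a tie. Given the paper's methodology, I would discharge this step — like Proposition~\ref{prop:identity} itself — by direct verification with a floating-point SMT solver, checking over all five IEEE formats (and bfloat16) that "$\exists x, y: (s,e) = \TwoSum(x,y)$" is equivalent to the stated conditions; the bounded precision makes this a finite \texttt{QF\_FP} query. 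Alternatively, a pen-and-paper argument is possible but would require reproducing much of Knuth's case analysis, so the SMT route is cleaner and consistent with the rest of the paper.
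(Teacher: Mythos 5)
Your high-level structure matches the paper's: observe that the easy direction is free (a fixed point is its own output), so the real content is idempotence ($\TwoSum \circ \TwoSum = \TwoSum$), and then appeal to Proposition~\ref{prop:identity} for the explicit characterization. However, your route to idempotence is substantially overcomplicated compared to the paper's, and you miss a much shorter argument that you nearly state yourself.

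You correctly note in Step~1 that $(s,e) = \TwoSum(x,y)$ gives $s = \RNE(x+y)$ and $e = (x+y) - s$ exactly, hence $s + e = x + y$. But then, to show $\TwoSum(s,e) = (s,e)$, you pivot to reasoning about $\lvert e \rvert \le \tfrac{1}{2}\ulp(s)$, about whether $e$ is a power of two, about round-to-even tie cases, and finally concede the argument to an SMT solver. None of that is needed. Since $s + e = x + y$, the second application of $\TwoSum$ computes
\[
s_2 = \RNE(s + e) = \RNE(x + y) = s
\qquad\text{and}\qquad
e_2 = (s + e) - s_2 = (x + y) - s = e,
\]
directly from the defining property of $\TwoSum$, with no reference to ulps, mantissa shape, or tie-breaking whatsoever. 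This is the paper's entire proof of the hard direction. Note in particular that you do not need to peer into the body of the $\TwoSum$ pseudocode ($x_{\text{eff}}$, $\delta_x$, etc.) or separately establish $s \oplus e = s$: that fact falls out of $\RNE(s+e) = \RNE(x+y) = s$. Where the paper does use an SMT solver is in Proposition~\ref{prop:identity} (the explicit SELTZO characterization of fixed points); Proposition~\ref{prop:outputs} itself is a clean two-line algebraic reduction to it. Your plan would eventually arrive at a correct conclusion, but it reintroduces the Knuth-style case analysis that the error-free-transformation identity $s + e = x + y$ was precisely designed to let you bypass.
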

\begin{proof}
    It suffices to show that every output of $\TwoSum$ is a fixed point, i.e., $\TwoSum$ is an idempotent operation. Given any floating-point numbers $(x, y)$, set $(s_1, e_1) \coloneqq \TwoSum(x, y)$ and $(s_2, e_2) \coloneqq \TwoSum(s_1, e_1)$. By the defining property of $\TwoSum$, we have $x + y = s_1 + e_1 = s_2 + e_2$, from which we deduce $s_2 = \RNE(s_1 + e_1) = \RNE(x + y) = s_1$ and $e_2 = x + y - s_2 = x + y - s_1 = e_1$. \qed
\end{proof}

Propositions \ref{prop:identity} and \ref{prop:outputs} express general constraints on the possible outputs of the $\TwoSum$ operation. However, verifying FPANs of practical interest requires stronger results formulated for specific classes of floating-point inputs. In the \hyperref[sec:Appendix]{Appendix} of this paper, we state a collection of over 70 lemmas that precisely characterize possible outputs for various input classes that cover the space of all possible inputs. In many cases, these lemmas are the strongest possible, in the sense that every abstract output not explicitly ruled out by the lemma is witnessed by some pair of concrete inputs. We verify these lemmas for the \textsf{binary16}, \textsf{bfloat16}, \textsf{binary32}, \textsf{binary64}, and \textsf{binary128} formats using a portfolio of SMT solvers, including Z3~\cite{Z3}, CVC5~\cite{CVC5}, MathSAT~\cite{MathSAT}, and Bitwuzla~\cite{BitwuzlaTool}.

\section{Verification} \label{sec:Verification}

With the SELTZO abstraction defined, we can now state our automatic FPAN verification procedure. Suppose we are given an FPAN $F$ and a property $P$ that is expressible as a logical combination of linear inequalities in the SELTZO varibles $(s_x, e_x, \nlz_x, \nlo_x, \ntz_x, \nto_x)$. We construct a statement $S_{P,F}$ in quantifier-free Presburger arithmetic that expresses the existence of an abstract counterexample to $P(F)$. If this statement is unsatisfiable, then $P(F)$ has no abstract counterexamples. Hence, no concrete counterexample exists, and $P(F)$ is proven.

The first step of this procedure is to assign a unique label $v_i$ to every wire segment in $F$. Every $\TwoSum$ gate delineates a new segment of the wires it connects. Thus, an FPAN with $n$ wires and $g$ gates has $n + 2g$ distinct wire segments. We then introduce SELTZO variables $(s_{v_i}, e_{v_i}, \nlz_{v_i}, \nlo_{v_i}, \ntz_{v_i}, \nto_{v_i})$ indexed by the labels $v_i$, creating a total of $6n + 12g$ variables.

We construct the statement $S_{P,F}$ as a logical conjunction of three types of conditions: (1) \textit{consistency conditions} that enforce the validity of the abstract values $(s_{v}, e_{v}, \nlz_{v}, \nlo_{v}, \ntz_{v}, \nto_{v})$; (2) \textit{execution conditions} that constrain the possible outputs of each $\TwoSum$ gate; and (3) \textit{counterexample conditions} that encode the negation of the property $P(F)$ that we wish to prove.

We first state the consistency conditions in Equations~\eqref{eq:consistencyfirst}--\eqref{eq:consistencylast}, which give a necessary and sufficient characterization of all valid SELTZO abstract values. One copy of these consistency conditions is instantiated for each label $v_i$.
\begin{enumerate}
    \item The sign bit must be zero or one, and the exponent must be bounded below.
    \begin{equation} \label{eq:consistencyfirst}
        (s_v = 0) \vee (s_v = 1) \qquad\qquad e_v \ge \emin - 1
    \end{equation}
    \item If a floating-point variable is zero (i.e., $e_v = \emin - 1$), then its mantissa must consist entirely of zeros.
    \begin{equation}
        (e_v = \emin - 1) \to [(\nlz_v = \ntz_v = p - 1) \wedge (\nlo_v = \nto_v = 0)]
    \end{equation}
    \item The leading and trailing bits of the mantissa are either 0 or 1.
    \begin{align}
        [(\nlz_v > 0) \wedge (\nlo_v = 0)] &\vee [(\nlz_v = 0) \wedge (\nlo_v > 0)] \\
        [(\ntz_v > 0) \wedge (\nto_v = 0)] &\vee [(\ntz_v = 0) \wedge (\nto_v > 0)]
    \end{align}
    \item The number of leading and trailing bits must be bounded by $p-1$, the width of the mantissa.
    \begin{align}
        (\nlz_v = \ntz_v = p - 1) &\vee (\nlz_v + \ntz_v < p - 1) \\
        (\nlo_v = \nto_v = p - 1) &\vee (\nlo_v + \nto_v < p - 1) \\
        (\nlz_v + \nto_v = p - 1) &\vee (\nlz_v + \nto_v < p - 2) \\
        (\ntz_v + \nlo_v = p - 1) &\vee (\ntz_v + \nlo_v < p - 2) \label{eq:consistencylast}
    \end{align}
\end{enumerate}
The upper bound of $p-2$ in the last two conditions expresses the constraint that, in a bit string of the form $00 \cdots 0 b 11 \cdots 1$, the middle bit $b$ either belongs to the group of leading zeros or trailing ones. Thus, $\nlz_v + \nto_v \ne p - 2$.

We then adjoin a set of execution conditions to $S_{P,F}$ for each $\TwoSum$ gate in $F$. Each of these sets comprises hundreds of inequalities with complex logical interdependencies, collectively stated in Propositions \ref{prop:nonoverlapping}, \ref{prop:identity}, \ref{prop:outputs}, and the lemmas given in the \hyperref[sec:Appendix]{Appendix}.

Finally, we add counterexample conditions to encode the \textit{negation} of the property $P(F)$ that we wish to prove. These conditions must be formulated such that any concrete counterexample to the desired property $P(F)$ has SELTZO abstract values that violate the counterexample conditions. Any property that is expressible as a logical combination of linear inequalities in the SELTZO variables can be used to construct the counterexample conditions. By Proposition~\ref{prop:nonoverlapping}, this class includes the following nonoverlapping invariants:
\begin{itemize}
    \item $v \succ_{\mathcal{S}} w$ is expressed as $(e_w = \emin - 1) \vee (e_v - e_w \ge p - \ntz_v)$.
    \item $v \succ_{\mathcal{P}} w$ is expressed as $(e_w = \emin - 1) \vee (e_v - e_w \ge p)$.
    \item $v \succ_{\ulp} w$ is expressed as \\ $(e_w = \emin - 1) \vee (e_v - e_w > p - 1) \vee [(e_v - e_w = p - 1) \wedge (\ntz_w = p - 1)]$.
    \item $v \succ_{\mathsf{QD}} w$ is expressed as \\ $(e_w = \emin - 1) \vee (e_v - e_w > p) \vee [(e_v - e_w = p) \wedge (\ntz_w = p - 1)]$.
\end{itemize}
The counterexample conditions can also include inequalities between floating-point values of the form $|w| < 2^{-k} |v|$, which are particularly useful for formulating relative error bounds. As shown by the following proposition, these can be expressed as $(e_w = \emin - 1) \vee (e_v - e_w > k)$.

\begin{proposition} \label{prop:relerr}
    Let $x$ and $y$ be nonzero floating-point numbers. If $e_x - e_y > k$, then $|y| < 2^{-k} |x|$.
\end{proposition}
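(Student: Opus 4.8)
The plan is to sandwich $|x|$ and $|y|$ between consecutive powers of two using their exponents, and then chain the resulting inequalities. Since we assume every floating-point number is normalized or zero and both $x$ and $y$ are nonzero, each has implicit leading mantissa bit $m_0 = 1$. Plugging this into the value formula $(-1)^s(m_0.m_1\cdots m_{p-1})\times 2^e$ gives the standard two-sided bound
\begin{equation}
    2^{e_z} \le |z| < 2^{e_z + 1}
\end{equation}
for any nonzero normalized precision-$p$ number $z$, where the lower bound comes from $m_0 = 1$ (all other digits nonnegative) and the strict upper bound from $m_0.m_1\cdots m_{p-1} \le 2 - 2^{-(p-1)} < 2$. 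Instantiating this at $x$ gives $|x| \ge 2^{e_x}$, and at $y$ gives $|y| < 2^{e_y + 1}$.

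Next I would use the integrality of exponents. The hypothesis $e_x - e_y > k$ is a strict inequality between integers, so it is equivalent to $e_x - e_y \ge k + 1$, i.e.\ $e_y + 1 \le e_x - k$. Combining this with the two bounds above yields the chain
\begin{equation}
    |y| < 2^{e_y + 1} \le 2^{e_x - k} = 2^{-k}\,2^{e_x} \le 2^{-k}\,|x|,
\end{equation}
where the first inequality is strict, so the whole chain is strict, giving $|y| < 2^{-k}|x|$ as claimed.

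There is essentially no hard part here; the statement is routine. The only points requiring a moment of care are (i) that the normalization assumption is what licenses the lower bound $|x| \ge 2^{e_x}$ — a subnormal $x$ could violate it, which is precisely why the proposition hypothesizes that $x$ and $y$ are (nonzero, hence normalized) floating-point numbers; and (ii) tracking which inequality is strict, so that the conclusion $|y| < 2^{-k}|x|$ is strict rather than merely $\le$. I note that this proposition is exactly what is needed to justify the earlier claim that a counterexample condition of the form $|w| < 2^{-k}|v|$ can be soundly encoded as $(e_w = \emin - 1) \vee (e_v - e_w > k)$: when $w = \pm\mathtt{0.0}$ the bound is immediate, and otherwise Proposition~\ref{prop:relerr} applies.
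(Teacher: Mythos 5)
Your proof is correct and takes essentially the same approach as the paper's: bound $|x|$ from below by $2^{e_x}$ and $|y|$ from above via the mantissa bound, then combine with the exponent gap. The only cosmetic difference is that the paper uses the exact upper bound $|y| \le (2 - 2\u)2^{e_y}$ and extracts strictness from the factor $(1 - \u) < 1$, whereas you use the looser (already strict) bound $|y| < 2^{e_y+1}$ together with integrality of exponents to turn $e_x - e_y > k$ into $e_x - e_y \ge k+1$; both are fine given that $k$ is an integer in every use.
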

\begin{proof}
    By definition, $|x| \in [2^{e_x}, (2 - 2\u) 2^{e_x}]$ and $|y| \in [2^{e_y}, (2 - 2\u) 2^{e_y}]$. Taking the lower bound for $x$ and the upper bound for $|y|$, we have:
    \begin{equation}
        \frac{|y|}{|x|} \le \frac{(2 - 2\u) 2^{e_y}}{2^{e_x}} = (1 - \u) 2^{-(e_x - e_y - 1)} \le (1 - \u) 2^{-k} < 2^{-k}
    \end{equation}
    This proves $|y| < 2^{-k} |x|$, as required. \qed
\end{proof}

\subsection{Handling Subnormal Values} \label{sec:Subnormal}

Note that our consistency conditions do not impose an upper bound on the exponent $e_x$ of any abstract value. This means that the unsatisfiability of $S_{P,F}$ actually proves a stronger result: there are no counterexamples to $P(F)$ even in a hypothetical floating-point format with infinite exponent range. This key observation enables all of the analysis we have presented so far to generalize to subnormal inputs, as long as the property $P$ depends only on differences between exponents $e_x - e_y$ and not absolute exponent values.

\begin{proposition}
    If $S_{P,F}$ is unsatisfiable and the property $P$ is expressible using only $(s_{v}, \nlz_{v}, \nlo_{v}, \ntz_{v}, \nto_{v})$ and exponent differences of the form $e_v - e_w$, then $P(F)$ holds for all finite inputs.
\end{proposition}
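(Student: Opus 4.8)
The plan is to argue by contradiction: assuming $P(F)$ fails on some finite input assignment, we exhibit a model of $S_{P,F}$, contradicting its unsatisfiability. So suppose $F$, run on finite inputs, produces outputs violating $P$. Executing $F$ concretely assigns a finite floating-point value $c_v$ to each wire segment $v$; since subnormal inputs and intermediate underflow are permitted, some $c_v$ may be subnormal (or zero). For each nonzero $c_v$, write it in \emph{normalized} form $(-1)^{s_v}(1.m_{v,1}\cdots m_{v,p-1})_2\,2^{\hat e_v}$ with $\hat e_v \in \mathbb{Z}$ its true normalized exponent (which may lie below $\emin$ when $c_v$ is subnormal), and read the remaining SELTZO data $(s_v,\nlz_v,\nlo_v,\ntz_v,\nto_v)$ off the $(p-1)$-bit normalized mantissa. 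Since $F$ has finitely many wire segments, $\{\hat e_v\}$ is finite, so fix an integer $K$ large enough that $\hat e_v + K \ge \emin$ for every nonzero $c_v$ (with some additional slack, discussed below). Assign the variables of $S_{P,F}$ by $e_v := \hat e_v + K$ together with the above mantissa data for nonzero $c_v$, and $e_v := \emin - 1$ with the all-zero mantissa pattern for $c_v = \pm\mathtt{0.0}$. Crucially, the consistency conditions impose no \emph{upper} bound on $e_v$, so the shifted exponents $\hat e_v + K$ remain admissible even when they exceed the original $\emax$.

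I would then check the three kinds of clauses in $S_{P,F}$. The consistency conditions hold because each tuple is, by construction, the SELTZO abstraction of a genuine precision-$p$ value (normal or zero) with $e_v \ge \emin - 1$, and nonzero values receive $e_v \ge \emin$, so the intended equivalence ``$e_v = \emin - 1$ iff $v$ is zero'' is maintained; these tuples are precisely the ones the consistency conditions admit, since those conditions are (as stated in the paper) a sound and complete characterization of valid SELTZO values. The counterexample conditions --- the negation of $P(F)$ --- hold because, by hypothesis, $P$ touches exponents only through differences $e_v - e_w$, otherwise only through $(s_v,\nlz_v,\nlo_v,\ntz_v,\nto_v)$, and through the zero-flag $e_w = \emin - 1$; the shift by $K$ preserves every difference between nonzero segments and every mantissa count, and zero-valued segments still satisfy the flag, so the concrete run's violation of $P$ transfers verbatim to the constructed assignment.

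The substantive step, and the one I expect to be the main obstacle, is the execution conditions. For each $\TwoSum$ gate with input segments $a,b$ and output segments $a',b'$, we have $(c_{a'},c_{b'}) = \TwoSum(c_a,c_b)$, and we must show the assigned abstractions satisfy the verified input-output lemmas. The key observation is that these lemmas --- machine-checked by SMT for fixed concrete formats --- in fact constrain only exponent \emph{differences} alongside $(s,\nlz,\nlo,\ntz,\nto)$, and that $\TwoSum$ is scale-invariant and, away from the underflow boundary, rounds identically irrespective of exponent range; moreover, once an input exponent gap exceeds $p+1$ the gate is simply the identity (Propositions~\ref{prop:identity} and~\ref{prop:outputs}), so the ``nontrivial'' regime spans only a bounded window of exponents. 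Hence any hypothetical violation of a lemma by our (shifted, possibly originally-subnormal) gate data could be translated by a uniform shift into the exponent range of a format for which the lemma was verified, yielding a contradiction. The care required here is twofold: confirming that a single $\TwoSum$ evaluation --- including its internal quantities $s, x_{\text{eff}}, y_{\text{eff}}, \delta_x, \delta_y$ --- stays within a bounded factor of its inputs, so that a suitably padded choice of $K$ genuinely avoids underflow everywhere inside each gate; and verifying that zero-valued wires engage the lemmas only through their zero-flag, so that the arbitrary choice $e_v = \emin - 1$ for them is harmless. Granting this, the constructed assignment satisfies every clause of $S_{P,F}$, contradicting unsatisfiability, so $P(F)$ holds on all finite inputs.
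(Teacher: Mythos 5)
Your proposal takes the same route as the paper: argue the contrapositive, rescale a concrete counterexample by a large power of two to escape the subnormal regime, and exploit the absence of an upper exponent bound in the consistency conditions to read off a satisfying assignment of $S_{P,F}$. You spell out the three clause families explicitly, which the paper compresses into a single ``hence.'' The one substantive step you flag but leave open --- and which the paper also elides --- is why the shifted data at a gate whose original wire values were subnormal corresponds to a genuine $\TwoSum$ over normal or zero values, so that the verified lemmas apply. The concern you raise about the internal quantities $s, x_{\text{eff}}, y_{\text{eff}}, \delta_x, \delta_y$ staying within a bounded factor of the inputs is not the right one, since the execution conditions constrain only the gate's external inputs and outputs; the fact actually needed is that rounding inside $\TwoSum$ is \emph{exact} whenever the rounded result falls in the subnormal range (a sum of two floating-point numbers that lands below $2^{\emin}$ is exactly representable, and the remaining intermediate differences in $\TwoSum$ are exact by the algorithm's correctness proof). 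This gives $\TwoSum(2^K x, 2^K y) = 2^K\,\TwoSum(x, y)$ once $K$ is large enough that the outputs are normal or zero, which makes the shifted SELTZO tuples legitimate gate data and discharges the execution conditions. With that observation supplied, your argument and the paper's coincide.
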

\begin{proof}
    We prove the contrapositive. Suppose there exists a concrete counterexample to $P(F)$ consisting of finite (possibly subnormal) floating-point input values $(x_1, x_2, \dots, x_n)$. Any property $P$ expressible in this form is invariant under a global shift of all exponents. Hence, for any sufficiently large $k$, the SELTZO abstraction of $(2^k x_1, 2^k x_2, \dots, 2^k x_n)$ yields an abstract counterexample to $P(F)$ consisting only of normalized abstract values. We may ignore the possibility of overflow in this construction because our abstraction uses an unbounded exponent range. This proves that $S_{P,F}$ is satisfiable. \qed
\end{proof}

\section{Results} \label{sec:Results}

We apply the machinery developed in this paper to prove tight error bounds for both the best known double-double addition algorithm (\textsf{ddadd}), widely used in software libraries~\cite{XBLAS,CRLibM,COREMATH}, and a novel algorithm that is simultaneously faster and more accurate than \textsf{ddadd}. Our new algorithm, named \textsf{madd} (for ``More Accurate Double-Double addition''), reduces the relative error of double-double addition from $3\u^2$ to $2\u^2$ while lowering circuit depth from 5 to 4. FPANs for both algorithms are given in Figure~\ref{fig:networks}.

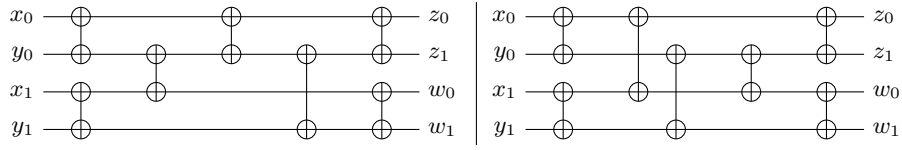
\begin{figure}[t]
    \begin{minipage}{0.52\textwidth}
        \centering
        \begin{tikzpicture}[scale=0.5]
            \draw (0,4) node [anchor=east] {$x_0$} -- (10,4) node [anchor=west] {$z_0$};
            \draw (0,3) node [anchor=east] {$y_0$} -- (10,3) node [anchor=west] {$z_1$};
            \draw (0,2) node [anchor=east] {$x_1$} -- (10,2) node [anchor=west] {$w_0$};
            \draw (0,1) node [anchor=east] {$y_1$} -- (10,1) node [anchor=west] {$w_1$};
    
            \draw (1,4) circle (0.25);
            \draw (1,3) circle (0.25);
            \draw (1,4.25) -- (1,2.75);
    
            \draw (1,2) circle (0.25);
            \draw (1,1) circle (0.25);
            \draw (1,2.25) -- (1,0.75);
    
            \draw (3,3) circle (0.25);
            \draw (3,2) circle (0.25);
            \draw (3,3.25) -- (3,1.75);
    
            \draw (5,4) circle (0.25);
            \draw (5,3) circle (0.25);
            \draw (5,4.25) -- (5,2.75);
    
            \draw (7,3) circle (0.25);
            \draw (7,1) circle (0.25);
            \draw (7,3.25) -- (7,0.75);
    
            \draw (9,4) circle (0.25);
            \draw (9,3) circle (0.25);
            \draw (9,4.25) -- (9,2.75);
    
            \draw (9,2) circle (0.25);
            \draw (9,1) circle (0.25);
            \draw (9,2.25) -- (9,0.75);
        \end{tikzpicture}
    \end{minipage}\hfill\vline\hfill\begin{minipage}{0.47\textwidth}
        \centering
        \begin{tikzpicture}[scale=0.5]
            \draw (0,4) node [anchor=east] {$x_0$} -- (9,4) node [anchor=west] {$z_0$};
            \draw (0,3) node [anchor=east] {$y_0$} -- (9,3) node [anchor=west] {$z_1$};
            \draw (0,2) node [anchor=east] {$x_1$} -- (9,2) node [anchor=west] {$w_0$};
            \draw (0,1) node [anchor=east] {$y_1$} -- (9,1) node [anchor=west] {$w_1$};
    
            \draw (1,4) circle (0.25);
            \draw (1,3) circle (0.25);
            \draw (1,4.25) -- (1,2.75);
    
            \draw (1,2) circle (0.25);
            \draw (1,1) circle (0.25);
            \draw (1,2.25) -- (1,0.75);
    
            \draw (3,4) circle (0.25);
            \draw (3,2) circle (0.25);
            \draw (3,4.25) -- (3,1.75);
    
            \draw (4,3) circle (0.25);
            \draw (4,1) circle (0.25);
            \draw (4,3.25) -- (4,0.75);
    
            \draw (6,3) circle (0.25);
            \draw (6,2) circle (0.25);
            \draw (6,3.25) -- (6,1.75);
    
            \draw (8,4) circle (0.25);
            \draw (8,3) circle (0.25);
            \draw (8,4.25) -- (8,2.75);
    
            \draw (8,2) circle (0.25);
            \draw (8,1) circle (0.25);
            \draw (8,2.25) -- (8,0.75);
        \end{tikzpicture}
    \end{minipage}
    \caption{Augmented network diagrams for \textsf{ddadd} (left) and \textsf{madd} (right), our new double-double addition algorithm, with error terms explicitly computed and named. The extra $\TwoSum$ gate used to compute these error terms serves only to facilitate our analysis and should not be included in an actual implementation of either algorithm.}
    \label{fig:augmented}
\end{figure}

\begin{theorem} \label{thm:ddadd}
    Let $(x_0, x_1)$ and $(y_0, y_1)$ be floating-point expansions in the \textsf{binary16}, \textsf{bfloat16}, \textsf{binary32}, \textsf{binary64}, or \textsf{binary128} format with $x_0 = \RNE(x_0 + x_1)$ and $y_0 = \RNE(y_0 + y_1)$. The \textsf{ddadd} algorithm, depicted in Figure~\ref{fig:networks} (left), computes a floating-point expansion $(z_0, z_1)$ that approximates the exact sum $x_0 + x_1 + y_0 + y_1$ with relative error bounded above by $(1 + 2\u) 2^{-(2p-2)} \approx 4\u^2$.
\end{theorem}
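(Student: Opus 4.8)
The plan is to apply the automatic verification procedure of Section~\ref{sec:Verification} to the \textsf{ddadd} network $F$ shown in Figure~\ref{fig:augmented} (left), taking $P$ to be the conjunction of the nonoverlapping invariant $z_0 \succ_{\mathsf{QD}} z_1$ together with the error bound $|w_0| < 2^{-(2p-3)}|z_0|$ and $|w_1| < 2^{-(2p-2)}|z_0|$ on the two discarded error terms. First I would label all $n + 2g$ wire segments of the augmented network and instantiate the consistency conditions \eqref{eq:consistencyfirst}--\eqref{eq:consistencylast} for each label. Second, I would translate the two preconditions $x_0 = \RNE(x_0 + x_1)$ and $y_0 = \RNE(y_0 + y_1)$ into SELTZO constraints: by Proposition~\ref{prop:outputs}, these say exactly that the input pairs $(x_0, x_1)$ and $(y_0, y_1)$ satisfy the hypotheses of Proposition~\ref{prop:identity}, i.e., they are fixed points of $\TwoSum$. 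Third, for each of the $\TwoSum$ gates in the network I would adjoin the execution conditions drawn from Propositions \ref{prop:identity}, \ref{prop:outputs}, and the input-class lemmas of the Appendix, specialized to whatever structural information about the gate's inputs is already available.

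The key reduction step is then to form $S_{P,F}$ as the conjunction of all consistency conditions, all execution conditions, the precondition constraints, and the negated counterexample conditions (via Proposition~\ref{prop:nonoverlapping} for $\succ_{\mathsf{QD}}$ and Proposition~\ref{prop:relerr} for the $|w_i| < 2^{-k}|z_0|$ bounds), and to check that $S_{P,F}$ is unsatisfiable in \texttt{QF\_LIA} using the SMT solver portfolio (Z3, CVC5, MathSAT, Bitwuzla). Once unsatisfiability is confirmed, the one-sidedness of the reduction gives that $P(F)$ has no concrete counterexample, hence the non-discarded output $(z_0, z_1)$ is $\mathsf{QD}$-nonoverlapping and the discarded terms $w_0, w_1$ obey the stated magnitude bounds. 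Since $z_0 + z_1 + w_0 + w_1 = x_0 + x_1 + y_0 + y_1$ exactly (the augmented network only adds a $\TwoSum$ gate, which preserves the sum), the relative error of $(z_0, z_1)$ is $|w_0 + w_1|/|x_0+x_1+y_0+y_1|$, and a short calculation combining the two bounds with $\mathsf{QD}$-nonoverlapping ($|z_1| \le \tfrac12\ulp(z_0)$, so $|z_0 + z_1| \ge (1 - \u)|z_0|$) yields the bound $(1 + 2\u)2^{-(2p-2)} \approx 4\u^2$.

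The main obstacle is choosing the counterexample conditions with exactly the right constants $2^{-(2p-3)}$ and $2^{-(2p-2)}$ so that (i) they are tight enough for the arithmetic in the final paragraph to close at $(1+2\u)2^{-(2p-2)}$, yet (ii) the resulting $S_{P,F}$ is still unsatisfiable — the SELTZO abstraction is an overapproximation, so an overly aggressive bound produces a spurious abstract counterexample even though the concrete property holds. If a naive bound fails, I expect to need the case split that Joldes, Muller, and Popescu~\cite{Joldes2017} identified (inputs near powers of two where the two error terms interfere constructively): this corresponds to strengthening the execution conditions on the relevant gates using the sharper power-of-two lemmas from the Appendix, which track $\ntz$ and $\nlo$ precisely near exponent boundaries. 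A secondary concern is simply confirming that at least one solver in the portfolio terminates quickly on $S_{P,F}$; the paper's empirical claim is that it does, in under one second.
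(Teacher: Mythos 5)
Your overall strategy — augment \textsf{ddadd} with an extra $\TwoSum$ gate to expose the discarded error terms $(w_0, w_1)$, prove a magnitude bound on these via the SELTZO reduction, and close with algebra — is exactly what the paper does, but the specific counterexample conditions you chose do not yield the stated bound, and you miss the structural observation that makes the arithmetic close.

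With your bounds $|w_0| < 2^{-(2p-3)}|z_0| = 8\u^2 |z_0|$ and $|w_1| < 2^{-(2p-2)}|z_0| = 4\u^2 |z_0|$, the triangle inequality only gives $|w_0 + w_1| < 12\u^2 |z_0|$, and dividing by $|z_0 + z_1 + w_0 + w_1| \ge (1 - \u)|z_0| - 12\u^2|z_0|$ produces a relative error bound of roughly $12\u^2$, three times weaker than the claimed $(1 + 2\u)2^{-(2p-2)} \approx 4\u^2$. The arithmetic simply does not close at these constants. The paper instead proves via SMT the single, sharper inequality $|w_0| < 2^{-(2p-2)}|z_0|$ (i.e., $k = 2p - 2$, not $2p - 3$), and then — rather than proving a separate bound on $|w_1|$ or a $\mathsf{QD}$-nonoverlapping invariant — invokes Proposition~\ref{prop:outputs}: because $(z_0, z_1)$ and $(w_0, w_1)$ are both $\TwoSum$ outputs, they are fixed points of $\TwoSum$, hence $z_0 = \RNE(z_0 + z_1)$ and $w_0 = \RNE(w_0 + w_1)$, which give $z_0 + z_1 = (1 + \delta_z) z_0$ and $w_0 + w_1 = (1 + \delta_w) w_0$ with $|\delta_z|, |\delta_w| \le \u$ for free. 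Substituting these yields
\begin{equation*}
\frac{|(1 + \delta_w) w_0|}{|(1 + \delta_z) z_0 + (1 + \delta_w) w_0|} \le \frac{(1 + \u)2^{-(2p-2)}}{1 - (1 - \u)2^{-(2p-2)}} \le (1 + 2\u) 2^{-(2p-2)},
\end{equation*}
which is what you need. So the fix is twofold: tighten your $w_0$ bound to match the theorem's $k$, and replace both your ad-hoc $|w_1|$ bound and the $\mathsf{QD}$-nonoverlapping condition with the observation that $(w_0, w_1)$ and $(z_0, z_1)$ are already $\TwoSum$ fixed points by construction of the augmented network — no extra SMT proof is needed for that.
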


\begin{theorem} \label{thm:madd}
    Let $(x_0, x_1)$ and $(y_0, y_1)$ be floating-point expansions in the \textsf{binary16}, \textsf{bfloat16}, \textsf{binary32}, \textsf{binary64}, or \textsf{binary128} format with $x_0 = \RNE(x_0 + x_1)$ and $y_0 = \RNE(y_0 + y_1)$. The \textsf{madd} algorithm, depicted in Figure~\ref{fig:networks} (right), computes a floating-point expansion $(z_0, z_1)$ that approximates the exact sum $x_0 + x_1 + y_0 + y_1$ with relative error bounded above by $(1 + 2\u) 2^{-(2p-1)} \approx 2\u^2$.
\end{theorem}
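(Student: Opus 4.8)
The plan is to invoke the automated verification procedure of Section~\ref{sec:Verification} on the FPAN $F$ obtained from \textsf{madd} by appending the auxiliary $\TwoSum$ gate of Figure~\ref{fig:augmented} (right), which names the two would-be-discarded terms $w_0, w_1$ as explicit outputs. Since every $\TwoSum$ gate preserves the exact sum of its wires, the outputs of $F$ satisfy $z_0 + z_1 + w_0 + w_1 = x_0 + x_1 + y_0 + y_1$ identically, so (assuming this common value is nonzero, the degenerate case being trivial) the relative error we must bound is exactly $|w_0 + w_1| / |x_0 + x_1 + y_0 + y_1|$, a quantity of the shape handled by Proposition~\ref{prop:relerr}.

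First I would encode the hypotheses. The condition $x_0 = \RNE(x_0 + x_1)$ says precisely that $(x_0, x_1)$ is a fixed point of $\TwoSum$ --- indeed $x_0 \oplus x_1 = x_0$ forces the recovered error $(x_0 + x_1) - (x_0 \oplus x_1)$ to equal $x_1$ --- so it is captured on the two input segments by the disjunction of Proposition~\ref{prop:identity}, and likewise for $(y_0, y_1)$. Second, I would pick a sufficient SELTZO condition on the four output segments implying $|w_0 + w_1| \le (1 + 2\u)2^{-(2p-1)}|x_0+x_1+y_0+y_1|$: for instance the disjunction of ``$w_0 = w_1 = \pm\mathtt{0.0}$'' (exact result) with a clause forcing $(z_0, z_1)$ and $(w_0, w_1)$ to be suitably nonoverlapping, $e_{z_0} - e_{w_0}$ to be large enough, and trailing-zero counts near powers of two to be controlled, so that a triangle-inequality estimate $|w_0 + w_1| \le |w_0| + |w_1|$ against $|x_0+x_1+y_0+y_1| \ge |z_0| - |z_1| - |w_0| - |w_1|$ closes with the stated constant (Proposition~\ref{prop:relerr}). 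The \emph{negation} of this condition is the counterexample clause.

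Next I would assemble $S_{P,F}$ in \texttt{QF{\textunderscore}LIA}: one copy of the consistency conditions~\eqref{eq:consistencyfirst}--\eqref{eq:consistencylast} per wire segment; one copy of the execution conditions (Propositions~\ref{prop:identity} and~\ref{prop:outputs} together with the Appendix lemmas, selected by the SELTZO class of each gate's inputs) per $\TwoSum$ gate; the two precondition clauses; and the negated-$P$ counterexample clause. Dispatching $S_{P,F}$ to the SMT portfolio (Z3, CVC5, MathSAT, Bitwuzla) should return \textsf{unsat}, which by the soundness argument of Section~\ref{sec:Verification} means $P(F)$ has no abstract, hence no concrete, counterexample in any of the five listed formats; and since $P$ mentions only exponent \emph{differences}, the exponent-shift argument of Section~\ref{sec:Subnormal} extends the conclusion to subnormal inputs. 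The proof then parallels that of Theorem~\ref{thm:ddadd}, differing only in the choice of $F$ and in the counterexample clause.

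I expect the main obstacle to be the SELTZO formulation of the error bound, not the SMT call. The target $(1+2\u)2^{-(2p-1)}$ lies strictly between $2^{-(2p-1)}$ and $2^{-(2p-2)}$, so a crude exponent-gap condition $e_{z_0} - e_{w_0} > 2p - 2$ yields only the weaker $\approx 4\u^2$ bound of Theorem~\ref{thm:ddadd}, while $e_{z_0} - e_{w_0} > 2p-1$ may simply be false; closing the remaining factor of two forces one to exploit the leading/trailing-zero structure the SELTZO abstraction tracks. The prototypical tight case is $z_0$ a power of two with $z_1 < 0$, where the denominator drops to $(1-\u)2^{e_{z_0}}$ --- exactly the slack the extra $2\u$ in the bound pays for. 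Finding a counterexample clause that is simultaneously strong enough to imply the real-valued bound and weak enough that the SELTZO overapproximation (Propositions~\ref{prop:identity}--\ref{prop:outputs} plus the more than $70$ Appendix lemmas) can actually refute its negation is the delicate part; when an attempt fails, the spurious model returned by the solver localizes which Appendix lemma or consistency invariant must be tightened, and one iterates.
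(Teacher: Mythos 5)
Your proposal follows the same blueprint as the paper: augment \textsf{madd} with an extra $\TwoSum$ to expose the error terms $(w_0,w_1)$, encode the preconditions $x_0=\RNE(x_0+x_1)$ and $y_0=\RNE(y_0+y_1)$ as fixed-point-of-$\TwoSum$ conditions via Proposition~\ref{prop:identity}, negate a SELTZO-expressible output property, dispatch to the SMT solver, and appeal to the exponent-shift argument of Section~\ref{sec:Subnormal} for subnormals. Where you diverge from the paper --- and misplace the difficulty --- is the choice of the property $P$. You anticipate a compound counterexample clause encoding nonoverlapping-ness of $(z_0,z_1)$ and $(w_0,w_1)$, exponent gaps, \emph{and} trailing-zero controls, so that a triangle inequality closes to the awkward constant $(1+2\u)2^{-(2p-1)}$. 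The paper's $P$ is far leaner: just the clean exponent-gap implication $(\text{preconditions})\to|w_0|<2^{-(2p-1)}|z_0|$, encoded per Proposition~\ref{prop:relerr} as $e_{w_0}=\emin-1$ or $e_{z_0}-e_{w_0}>2p-1$. Nonoverlapping-ness of the $(z_0,z_1)$ and $(w_0,w_1)$ pairs need not be asserted in $P$ at all --- it is automatic from Proposition~\ref{prop:outputs}, since both are $\TwoSum$ outputs --- which lets the paper write $z_0+z_1=(1+\delta_z)z_0$ and $w_0+w_1=(1+\delta_w)w_0$ with $|\delta_z|,|\delta_w|\le\u$ and extract the $(1+2\u)$ slack by a short algebraic computation \emph{after} the SMT call, rather than baking it into the SELTZO clause. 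Your worry that ``$e_{z_0}-e_{w_0}>2p-1$ may simply be false'' is exactly what the SMT verification settles affirmatively; and per the ablation in Table~\ref{tab:ablation}, the leading/trailing-bit bookkeeping needed to refute the negation at $k=2p-1$ lives in the Appendix execution lemmas, not in the counterexample clause. So you had all the pieces, but placed the hard bit-level reasoning on the wrong side of the pipeline.
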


We prove Theorems~\ref{thm:ddadd} and \ref{thm:madd} by using the SELTZO abstraction to establish $P(F)$ for a suitably chosen property $P$ given below. Some algebraic manipulation is necessary to adapt $P$ into a true relative error bound that considers all outputs.

\begin{proof}
    Consider the augmented FPANs depicted in Figure~\ref{fig:augmented}, which include an extra $\TwoSum$ gate to compute the error terms $(w_0, w_1)$. Using the SELTZO encodings stated in Propositions~\ref{prop:identity} and \ref{prop:relerr}, we use an SMT solver to prove
    \begin{equation}
        P \coloneqq (x_0 = \RNE(x_0 + x_1)) \wedge (y_0 = \RNE(y_0 + y_1)) \to (|w_0| < 2^{-k} |z_0|)
    \end{equation}
    where $k = 2p - 2$ for \textsf{ddadd} and $k = 2p - 1$ for \textsf{madd}. Note that $x_0 = \RNE(x_0 + x_1)$ is equivalent to the statement that $(x_0, x_1)$ is a fixed point of $\TwoSum$. Moreover, by Proposition~\ref{prop:outputs}, we have $z_0 = \RNE(z_0, z_1)$ and $w_0 = \RNE(w_0, w_1)$ because $(z_0, z_1)$ and $(w_0, w_1)$ are $\TwoSum$ outputs. It follows that $w_0 + w_1 = (1 + \delta_w) w_0$ and $z_0 + z_1 = (1 + \delta_z) z_0$ for some $|\delta_w|, |\delta_z| \le \u$. Hence, the relative error of these FPANs is bounded above by:
    \begin{multline}
        \frac{|(z_0 + z_1) - (x_0 + x_1 + y_0 + y_1)|}{|x_0 + x_1 + y_0 + y_1|}
        = \frac{|(1 + \delta_w) w_0|}{|(1 + \delta_z) z_0 + (1 + \delta_w) w_0|} \\
        \le \frac{(1 + \u) |w_0|}{(1 - \u)(|z_0| - |w_0|)}
        \le \frac{(1 + \u) 2^{-k}}{1 - (1 - \u)2^{-k}}
        \le (1 + 2\u) 2^{-k}
    \end{multline}
    This expression is asymptotically equivalent to $4\u^2 + O(\u^3)$ for \textsf{ddadd} and $2\u^2 + O(\u^3)$ for \textsf{madd}. \qed
\end{proof}

In Table~\ref{tab:verificationtime}, we compare the speed of directly verifying the property $P$ defined above using a variety of floating-point theory solvers (\texttt{QF{\textunderscore}FP}) to resolving the satisfiability problems $S_{P,F}$ constructed by the SELTZO abstraction with a Presburger arithmetic solver (\texttt{QF{\textunderscore}LIA}). We benchmark a portfolio of state-of-the-art SMT solvers using the latest software versions available at the time of writing. In all cases, the SELTZO abstraction produces problems that are many orders of magnitude faster to solve. We also note that our SELTZO solve times remain constant with respect to the precision $p$ of the floating-point format, enabling scalability to wide formats that are intractable for bit-blasting.

\begin{table}[t]
    \centering
    \setlength\tabcolsep{3pt}
    \begin{tabular}{|c|c||c|c|c|c|c||c|} \hline
        FPAN           & Format             & Z3  & CVC5     & MathSAT  & Bitwuzla & Colibri2 & SELTZO    \\\hline\hline
        \textsf{ddadd} &  \textsf{binary16} & DNF & 153 min  & DNF      & 72 min   & N/A      & 0.927 sec \\\hline
        \textsf{madd}  &  \textsf{binary16} & DNF & 120 min  & 3898 min & 72 min   & N/A      & 0.713 sec \\\hline
        \textsf{ddadd} &  \textsf{bfloat16} & DNF & 704 min  & DNF      & 71 min   & N/A      & 0.838 sec \\\hline
        \textsf{madd}  &  \textsf{bfloat16} & DNF & 946 min  & DNF      & 99 min   & N/A      & 0.689 sec \\\hline
        \textsf{ddadd} &  \textsf{binary32} & DNF & 1088 min & DNF      & 640 min  & N/A      & 0.774 sec \\\hline
        \textsf{madd}  &  \textsf{binary32} & DNF & 1019 min & DNF      & 518 min  & N/A      & 0.722 sec \\\hline
        \textsf{ddadd} &  \textsf{binary64} & DNF & DNF      & DNF      & DNF      & N/A      & 0.623 sec \\\hline
        \textsf{madd}  &  \textsf{binary64} & DNF & DNF      & DNF      & DNF      & N/A      & 0.923 sec \\\hline
        \textsf{ddadd} & \textsf{binary128} & DNF & DNF      & DNF      & DNF      & N/A      & 0.880 sec \\\hline
        \textsf{madd}  & \textsf{binary128} & DNF & DNF      & DNF      & DNF      & N/A      & 0.991 sec \\\hline
    \end{tabular} \\[\baselineskip]
    \caption{Execution time for various SMT solvers to directly verify property $P$ in a floating-point theory (\texttt{QF{\textunderscore}FP}) compared to deciding the satisfiability of $S_{P,F}$ in quantifier-free Presburger arithmetic (\texttt{QF{\textunderscore}LIA}). A ``DNF'' entry indicates that a solver did not terminate within three days, while an ``N/A'' entry indicates that a solver rejected the problem as unsolvable. These benchmarks were performed on an AMD Ryzen 9 9950X processor using Z3 4.13.4, CVC5 1.2.0, MathSAT 5.6.11, Bitwuzla 0.7.0, and Colibri2 0.4. SELTZO satisfiability problems were solved using Z3 4.13.4.}
    \label{tab:verificationtime}
\end{table}

Note that our bound of $4\u^2$ for \textsf{ddadd} is slightly looser than the $3\u^2$ bound proven by Joldes, Muller, and Popescu~\cite{Joldes2017}. Our use of the SELTZO abstraction restricts us to proving bounds of the form $2^k \u^n + O(\u^{n+1})$, which we say are \textit{tight to the nearest bit} if the leading constant factor is the smallest possible power of two. The following example due to Muller and Rideau~\cite{Muller2022} shows that our \textsf{ddadd} bound is indeed tight to the nearest bit:
\begin{equation}
    x_0 \coloneqq 1 \qquad
    x_1 \coloneqq \u - \u^2 \qquad
    y_0 \coloneqq -\frac{1}{2} + \frac{\u}{2} \qquad
    y_1 \coloneqq -\frac{\u^2}{2} + \u^3
\end{equation}
It is straightforward to verify that the result returned by \textsf{ddadd} has relative error $\approx 3\u^2$ on these inputs. Moreover, the following example shows that our \textsf{madd} bound is also tight to the nearest bit:
\begin{equation}
    x_0 \coloneqq 1 + 2\u \qquad
    x_1 \coloneqq -\frac{\u}{2} - 2\u^2 \qquad
    y_0 \coloneqq -\u \qquad
    y_1 \coloneqq -\frac{\u^2}{2} - \u^3
\end{equation}
The result returned by \textsf{madd} has relative error $\approx 1.5\u^2$ on these inputs.

\subsection{Ablated Abstractions}

It is natural to ask whether simpler abstractions than SELTZO can prove Theorems \ref{thm:ddadd} and \ref{thm:madd}. To investigate this question, we consider two ablated abstractions that use subsets of the SELTZO variables $(s_x,\allowbreak e_x,\allowbreak \nlz_x,\allowbreak \nlo_x,\allowbreak \ntz_x,\allowbreak \nto_x)$.
\begin{definition}
    Let $x$ be a floating-point number. Using the notation of Definition~\ref{def:SELTZO}, the \textbf{SE abstraction} of $x$ is the 2-tuple $(s_x, e_x)$, and the \textbf{SETZ abstraction} of $x$ is the 3-tuple $(s_x, e_x, \ntz_x)$.
\end{definition}
The SE and SETZ abstractions are natural simplifications of SELTZO that omit some or all information about mantissa bit patterns. Notably, the SETZ abstraction is still expressive enough to capture all of the nonoverlapping invariants stated in Proposition~\ref{prop:nonoverlapping}, none of which refer to leading bits or trailing ones.

To perform FPAN verification with these simpler abstractions, we construct SE and SETZ satisfiability problems $S_{P,F}$ using special sets of reduced lemmas given in the \hyperref[sec:Appendix]{Appendix} that make no reference to the omitted variables $\nlz_x$, $\nlo_x$, $\nto_x$, and (for the SE abstraction) $\ntz_x$. We then find the maximal value of $k$ for which the property $P$ defined above holds in each abstraction. These values are reported in Table~\ref{tab:ablation}. While SE and SETZ are unable to match the tight error bounds provable in the SELTZO abstraction, they still establish nontrivial bounds that are difficult to prove by hand. However, these results show that modeling leading mantissa bits is necessary to prove tight FPAN error bounds.

\begin{table}[t]
    \centering
    \setlength\tabcolsep{6pt}
    \begin{tabular}{|c||c|c|c|} \hline
        FPAN & SE & SETZ & SELTZO \\\hline\hline
         \textsf{ddadd} & $2^{-(2p - 7)} = 128\u^2$ & $2^{-(2p - 4)} = 16\u^2$ & $2^{-(2p - 2)} = 4\u^2$ \\\hline
         \textsf{madd}  & $2^{-(2p - 6)} = 64\u^2$ & $2^{-(2p - 3)} = 8\u^2$ & $2^{-(2p - 1)} = 2\u^2$ \\\hline
    \end{tabular} \\[\baselineskip]
    \caption{Strongest relative error bounds for \textsf{ddadd} and \textsf{madd} that are provable in the SE, SETZ, and SELTZO abstractions.}
    \label{tab:ablation}
\end{table}

\section{Related Work} \label{sec:RelatedWork}

\paragraph{Automatic floating-point verification.}

Existing techniques for automatic verification of floating-point algorithms rely on abstractions derived from rational or real arithmetic, such as interval enclosures, polyhedra, and relational domains~\cite{Chen2008,Monniaux2008,Chapoutot2010,Rivera2024}. These abstractions are fundamentally inapplicable to FPANs. The notion of an error-free transformation like $\TwoSum$ only exists in finite-precision rounded arithmetic and has no semantically equivalent analogue in exact real arithmetic. Indeed, in real arithmetic, $\TwoSum(x, y) = (x + y, 0)$ is a trivial operation. As our results show, precise reasoning about FPANs requires explicitly modeling the interaction between the sum and error terms, taking into account the shape of the mantissa, which these abstractions cannot express.

\paragraph{Interactive floating-point verification.}

To the best of our knowledge, the only existing methods that can reason about error-free transformations and FPANs use interactive, as opposed to automatic, theorem provers. Tools such as Flocq~\cite{Flocq} and Gappa~\cite{Gappa} have been used to verify algorithms involving FPANs~\cite{CRLibM,COREMATH}, but they require a high degree of user expertise to construct sophisticated proof scripts. The SELTZO abstraction provides a complementary approach that could be integrated with these tools to provide a greater degree of automation.

% There are some techniques that mix bit-precise reasoning with floating-point semantics~\cite{Lee2016}, but these still rely on real-valued semantics.

\paragraph{Other approaches to high-precision arithmetic.}

Libraries for arbitrary-precision arithmetic, including GMP, MPFR, and Arb, make no internal use of floating-point operations~\cite{GMP,MPFR,Arb}. Instead, they implement arithmetic purely in terms of digit-by-digit integer operations. This approach allows for truly arbitrary precision, unconstrained by floating-point overflow and underflow limits, and avoids the complexity of propagating rounding errors that accompanies the use of error-free transformations. However, at moderate precision levels (2--8 machine words), these algorithms are many times slower than FPANs, requiring complex branching logic and more operations per bit on average. While FPANs are hard to discover and prove correct, they enable high-performance branch-free arithmetic that massively accelerates high-precision scientific applications.

\paragraph{Scalable abstraction in other domains.}

Recent work on the Bitwuzla SMT solver~\cite{BitwuzlaAbstraction} has used lemmas for integer multiplication and division to accelerate bit-vector verification, enabling scalability to previously intractable bit-widths. The SELTZO abstraction can be thought of as a floating-point analogue of this approach, characterizing the $\TwoSum$ operation in a precision-independent fashion to avoid full-width bit-blasting. One notable difference is that our approach does not require abstraction refinement tailored to a specific mantissa width or FPAN.

\paragraph{Sorting networks.}

FPANs are close analogues of sorting networks. Although they compute different operations, both are branch-free algorithms that accelerate sorting or accumulation of a fixed number of inputs in a data-parallel fashion. Because they both occupy a similar large combinatorial space, techniques for discovering and optimizing sorting networks, such as evolutionary algorithms~\cite{SorterHunter}, can also be applied to FPANs. Our work on the SELTZO abstraction provides an efficient correctness check that can be used to perform such a search.

\section{Conclusion} \label{sec:Conclusion}

In this paper, we defined \textit{floating-point accumulation networks} (FPANs) and showed that FPANs are key building blocks used to construct extended-precision floating-point algorithms. To address the difficulty of analyzing floating-point rounding errors, which critically affects the correctness of FPANs, we introduced the SELTZO abstraction to enable efficient and precise automatic reasoning about FPANs using SMT solvers. Using the SELTZO abstraction, we developed computer-verified proofs of a number of FPAN correctness properties. In particular, we automatically and rigorously proved that \textsf{madd}, a novel FPAN for double-double addition, is simultaneously faster and more accurate than \textsf{ddadd}, the previous best known algorithm.

% \begin{credits}
% \subsubsection{\ackname} A bold run-in heading in small font size at the end of the paper is
% used for general acknowledgments, for example: This study was funded
% by X (grant number Y).

% \subsubsection{\discintname}
% It is now necessary to declare any competing interests or to specifically
% state that the authors have no competing interests. Please place the
% statement with a bold run-in heading in small font size beneath the
% (optional) acknowledgments\footnote{If EquinOCS, our proceedings submission
% system, is used, then the disclaimer can be provided directly in the system.},
% for example: The authors have no competing interests to declare that are
% relevant to the content of this article. Or: Author A has received research
% grants from Company W. Author B has received a speaker honorarium from
% Company X and owns stock in Company Y. Author C is a member of committee Z.
% \end{credits}

\bibliographystyle{splncs04}
\bibliography{bibliography.bib}

\clearpage
\section*{Appendix} \label{sec:Appendix}

Let $x$ and $y$ be floating-point numbers, and let $(s, e) \coloneqq \TwoSum(x, y)$. Assume that $x$, $y$, $s$, and $e$ are all normalized or zero. The following lemmas constrain the possible values of $x$, $y$, $s$, and $e$ using the SE, SETZ, and SELTZO abstractions. Note that $\TwoSum$ is a commutative operation (i.e., $\TwoSum(x, y) = \TwoSum(y, x)$), so all of the following results also hold with the roles of $x$ and $y$ are reversed. We omit symmetric statements for brevity.

\subsection*{Lemma Family Z}

Lemmas in Family Z apply when one or both of the inputs $(x, y)$ are zero. These Lemmas support our SE, SETZ, and SELTZO results. \\

\noindent \textbf{Lemma Z1:}
\begin{align*}
    \TwoSum(\mathtt{+0.0}, \mathtt{+0.0}) &= (\mathtt{+0.0}, \mathtt{+0.0}) \\
    \TwoSum(\mathtt{+0.0}, \mathtt{-0.0}) &= (\mathtt{+0.0}, \mathtt{+0.0}) \\
    \TwoSum(\mathtt{-0.0}, \mathtt{-0.0}) &= (\mathtt{-0.0}, \mathtt{+0.0})
\end{align*}

\noindent \textbf{Lemma Z2:} If $x$ is nonzero, then $\TwoSum(x, \pm\mathtt{0.0}) = (x, \mathtt{+0.0})$.

\subsection*{Lemma Family SE-I}

Lemma SE-I gives a sufficient condition for the inputs $(x, y)$ to be returned unchanged by $\TwoSum$. This Lemma supports only our SE results and is superseded by a more precise Lemma in the SETZ and SELTZO abstractions. Note that, unlike the SETZ and SELTZO abstractions, the SE abstraction is not strong enough to state necessary and sufficient conditions for $(x, y) = \TwoSum(x, y)$. \\

\noindent \textbf{Lemma SE-I:} If $\lvert e_x - e_y \rvert < p + 1$ or $\lvert e_x - e_y \rvert = p + 1$ and $s_x = s_y$, then $(x, y) = \TwoSum(x, y)$.

\subsection*{Lemma Family SE-S}

Lemmas in Family SE-S apply to SE abstract inputs with the same sign. These Lemmas support only our SE results and are superseded by more precise Lemmas in the SETZ and SELTZO abstractions. In this Lemma Family, we implicitly assume that $x$ and $y$ are nonzero with $s_x = s_y$ throughout. \\

\noindent \textbf{Lemma SE-S1:} If $e_x = e_y + p$, then one of the following statements holds:
\begin{enumerate}
    \item $s_s = s_x$, $e_x \le e_s \le e_x + 1$, $s_e \ne s_y$, and $e_y - (p - 1) \le e_e \le e_x - p$.
    \item $s = x$ and $e = y$.
\end{enumerate}

\noindent \textbf{Lemma SE-S2:} If $e_x = e_y + (p - 1)$, then one of the following statements holds:
\begin{enumerate}
    \item $s_s = s_x$, $e_x \le e_s \le e_x + 1$, and $e = \mathtt{+0.0}$.
    \item $s_s = s_x$, $e_x \le e_s \le e_x + 1$, and $e_y - (p - 1) \le e_e \le e_x - p$.
\end{enumerate}

\noindent \textbf{Lemma SE-S3:} If $e_x = e_y + (p - 2)$, then one of the following statements holds:
\begin{enumerate}
    \item $s_s = s_x$, $e_x \le e_s \le e_x + 1$, and $e = \mathtt{+0.0}$.
    \item $s_s = s_x$, $e_x \le e_s \le e_x + 1$, $s_e \ne s_y$, and $e_y - (p - 1) \le e_e \le e_x - p$.
    \item $s_s = s_x$, $e_s = e_x$, $s_e = s_y$, and $e_y - (p - 1) \le e_e \le e_x - p$.
    \item $s_s = s_x$, $e_s = e_x + 1$, $s_e = s_y$, and $e_y - (p - 1) \le e_e \le e_x - (p - 1)$.
\end{enumerate}

\noindent \textbf{Lemma SE-S4:} If $e_x > e_y$ and $e_x < e_y + (p - 2)$, then one of the following statements holds:
\begin{enumerate}
    \item $s_s = s_x$, $e_x \le e_s \le e_x + 1$, and $e = \mathtt{+0.0}$.
    \item $s_s = s_x$, $e_s = e_x$, and $e_y - (p - 1) \le e_e \le e_x - p$.
    \item $s_s = s_x$, $e_s = e_x + 1$, and $e_y - (p - 1) \le e_e \le e_x - (p - 1)$.
\end{enumerate}

\noindent \textbf{Lemma SE-S5:} If $e_x = e_y$, then one of the following statements holds:
\begin{enumerate}
    \item $s_s = s_x$, $e_s = e_x + 1$, and $e = \mathtt{+0.0}$.
    \item $s_s = s_x$, $e_s = e_x + 1$, and $e_e = e_x - (p - 1)$.
\end{enumerate}

\subsection*{Lemma Family SE-D}

Lemmas in Family SE-D apply to SE abstract inputs with different signs. These Lemmas support only our SE results and are superseded by more precise Lemmas in the SETZ and SELTZO abstractions. In this Lemma Family, we implicitly assume that $x$ and $y$ are nonzero with $s_x \ne s_y$ throughout. \\

\noindent \textbf{Lemma SE-D1:} If $e_x = e_y + (p + 1)$, then one of the following statements holds:
\begin{enumerate}
    \item $s_s = s_x$, $e_s = e_x - 1$, $s_e \ne s_y$, and $e_y - (p - 1) \le e_e \le e_x - (p + 2)$.
    \item $s = x$ and $e = y$.
\end{enumerate}

\noindent \textbf{Lemma SE-D2:} If $e_x = e_y + p$, then one of the following statements holds:
\begin{enumerate}
    \item $s_s = s_x$, $e_s = e_x - 1$, and $e = \mathtt{+0.0}$.
    \item $s_s = s_x$, $e_s = e_x - 1$, $s_e = s_y$, and $e_y - (p - 1) \le e_e \le e_x - (p + 2)$.
    \item $s_s = s_x$, $e_s = e_x - 1$, $s_e \ne s_y$, and $e_y - (p - 1) \le e_e \le e_x - (p + 1)$.
    \item $s_s = s_x$, $e_s = e_x$, $s_e \ne s_y$, and $e_y - (p - 1) \le e_e \le e_x - p$.
    \item $s = x$ and $e = y$.
\end{enumerate}

\noindent \textbf{Lemma SE-D3:} If $e_x > e_y + 1$ and $e_x < e_y + p$, then one of the following statements holds:
\begin{enumerate}
    \item $s_s = s_x$, $e_x - 1 \le e_s \le e_x$, and $e = \mathtt{+0.0}$.
    \item $s_s = s_x$, $e_s = e_x - 1$, and $e_y - (p - 1) \le e_e \le e_x - (p + 1)$.
    \item $s_s = s_x$, $e_s = e_x$, and $e_y - (p - 1) \le e_e \le e_x - p$.
\end{enumerate}

\noindent \textbf{Lemma SE-D4:} If $e_x = e_y + 1$, then one of the following statements holds:
\begin{enumerate}
    \item $s_s = s_x$, $e_x - p \le e_s \le e_x$, and $e = \mathtt{+0.0}$.
    \item $s_s = s_x$, $e_s = e_x$, and $e_e = e_x - p$.
\end{enumerate}

\noindent \textbf{Lemma SE-D5:} If $e_x = e_y$, then one of the following statements holds:
\begin{enumerate}
    \item $s = \mathtt{+0.0}$ and $e = \mathtt{+0.0}$.
    \item $e_x - (p - 1) \le e_s \le e_x - 1$ and $e = \mathtt{+0.0}$.
\end{enumerate}

\subsection*{Lemma Family SETZ-I}

Lemma SETZ-I (for ``identical'') gives necessary and sufficient conditions for the inputs $(x, y)$ to be returned unchanged by $\TwoSum$. This Lemma supports both our SETZ and SELTZO results. \\

\noindent \textbf{Lemma SETZ-I:} Assume $x$ and $y$ are nonzero. $(s, e) = (x, y)$ if and only if any of the following conditions hold:
\begin{enumerate}
    \item $e_x > e_y + (p + 1)$.
    \item $e_x = e_y + (p + 1)$ and any of the following conditions hold: $e_y = f_y$, $s_x = s_y$, or $e_x > f_x$.
    \item $e_x = e_y + p$, $e_y = f_y$, $e_x < f_x + (p - 1)$, and $s_x = s_y$ or $e_x > f_x$.
\end{enumerate}

\subsection*{Lemma Family SETZ-F}

The remaining SETZ lemmas admit simpler statements if we make a change of variables from $(s_x, e_x, \ntz_x)$ to $(s_x, e_x, f_x)$ where $f_x \coloneqq e_x - (p - \ntz_x - 1)$. We call this value the \textit{trailing exponent} of $x$. The trailing exponent is the place value of the last nonzero mantissa bit of $x$; it acts like the dual of $e_x$, bounding the mantissa from below. \\

Lemmas in Family SETZ-F apply to addends with the same trailing exponent (i.e., $f_x = f_y$). These Lemmas support our SETZ results. We assume throughout this Lemma Family that $x$ and $y$ are nonzero. \\

\noindent \textbf{Lemma SETZ-FS0:} If $s_x = s_y$, $f_x = f_y$, and $e_x > e_y + 1$, then one of the following statements holds:
\begin{enumerate}
   \item $s_s = s_x$, $e_s = e_x$, $f_x + 1 \le f_s \le e_x - 1$, and $e = \mathtt{+0.0}$.
   \item $s_s = s_x$, $e_s = e_x + 1$, $f_x + 1 \le f_s \le e_y$, and $e = \mathtt{+0.0}$.
   \item $s_s = s_x$, $e_s = e_x + 1$, $f_s = e_x + 1$, and $e = \mathtt{+0.0}$.
\end{enumerate}

\noindent \textbf{Lemma SETZ-FS1:} If $s_x = s_y$, $f_x = f_y$, and $e_x = e_y + 1$, then one of the following statements holds:
\begin{enumerate}
   \item $s_s = s_x$, $e_s = e_x$, $f_x + 1 \le f_s \le e_x - 2$, and $e = \mathtt{+0.0}$.
   \item $s_s = s_x$, $e_s = e_x + 1$, $f_x + 1 \le f_s \le e_y$, and $e = \mathtt{+0.0}$.
   \item $s_s = s_x$, $e_s = e_x + 1$, $f_s = e_x + 1$, and $e = \mathtt{+0.0}$.
\end{enumerate}

\noindent \textbf{Lemma SETZ-FS2:} If $s_x = s_y$, $f_x = f_y$, $e_x = e_y$, and $e_x > f_x$, then one of the following statements holds:
\begin{enumerate}
   \item $s_s = s_x$, $e_s = e_x + 1$, $f_x + 1 \le f_s \le e_x$, and $e = \mathtt{+0.0}$.
\end{enumerate}

\noindent \textbf{Lemma SETZ-FS3:} If $s_x = s_y$, $f_x = f_y$, $e_x = e_y$, and $e_x = f_x$, then one of the following statements holds:
\begin{enumerate}
   \item $s_s = s_x$, $e_s = e_x + 1$, $f_s = e_x + 1$, and $e = \mathtt{+0.0}$.
\end{enumerate}

\noindent \textbf{Lemma SETZ-FD0:} If $s_x \ne s_y$, $f_x = f_y$, and $e_x > e_y + 1$, then one of the following statements holds:
\begin{enumerate}
   \item $s_s = s_x$, $e_s = e_x - 1$, $f_x + 1 \le f_s \le e_y$, and $e = \mathtt{+0.0}$.
   \item $s_s = s_x$, $e_s = e_x$, $f_x + 1 \le f_s \le e_x$, and $e = \mathtt{+0.0}$.
\end{enumerate}

\noindent \textbf{Lemma SETZ-FD1:} If $s_x \ne s_y$, $f_x = f_y$, and $e_x = e_y + 1$, then one of the following statements holds:
\begin{enumerate}
   \item For each $k$ where $f_x + 1 \le k \le e_x - 1$: $s_s = s_x$, $e_s = k$, $f_x + 1 \le f_s \le k$, and $e = \mathtt{+0.0}$.
   \item $s_s = s_x$, $e_s = e_x$, $f_x + 1 \le f_s \le e_x - 2$, and $e = \mathtt{+0.0}$.
   \item $s_s = s_x$, $e_s = e_x$, $f_s = e_x$, and $e = \mathtt{+0.0}$.
\end{enumerate}

\noindent \textbf{Lemma SETZ-FD2:} If $s_x \ne s_y$, $f_x = f_y$, and $e_x = e_y$, then one of the following statements holds:
\begin{enumerate}
   \item $s = \mathtt{+0.0}$ and $e = \mathtt{+0.0}$.
   \item For each $k$ where $f_x + 1 \le k \le e_x - 1$: $f_x + 1 \le f_s \le k$, $e_s = k$, and $e = \mathtt{+0.0}$.
\end{enumerate}

\subsection*{Lemma Family SETZ-E}

Lemmas in Family SETZ-E (for ``exact'') apply to addends with different trailing exponents whose floating-point sum is exact (i.e., $e = \pm\mathtt{0.0}$). These Lemmas support our SETZ results. We assume throughout this Lemma Family that $x$ and $y$ are nonzero. \\

\noindent \textbf{Lemma SETZ-EN0:} If $s_x = s_y$ or $e_x > f_x$, $f_x > e_y$, and $e_x < f_y + p$, then one of the following statements holds:
\begin{enumerate}
   \item $s_s = s_x$, $e_s = e_x$, $f_s = f_y$, and $e = \mathtt{+0.0}$.
\end{enumerate}

\noindent \textbf{Lemma SETZ-EN1:} If $s_x \ne s_y$, and one of the following statements holds:
\begin{enumerate}
    \item $e_x = f_x$, $f_x > e_y + 1$, $e_x < f_y + (p + 1)$
    \item $e_x = f_x + 1$, $f_x = e_y$, $e_y > f_y$
\end{enumerate}
then one of the following statements holds:
\begin{enumerate}
   \item $s_s = s_x$, $e_s = e_x - 1$, $f_s = f_y$, and $e = \mathtt{+0.0}$.
\end{enumerate}

\noindent \textbf{Lemma SETZ-ESP0:} If $s_x = s_y$, either $(e_x > e_y > f_x > f_y)$ or $(e_x > e_y + 1 > f_x > f_y)$, and $e_x < f_y + (p - 1)$, then one of the following statements holds:
\begin{enumerate}
   \item $s_s = s_x$, $e_x \le e_s \le e_x + 1$, $f_s = f_y$, and $e = \mathtt{+0.0}$.
\end{enumerate}

\noindent \textbf{Lemma SETZ-ESP1:} If $s_x = s_y$, $e_x = e_y + 1$, $e_y = f_x > f_y$, and $e_x < f_y + (p - 1)$, then one of the following statements holds:
\begin{enumerate}
   \item $s_s = s_x$, $e_s = e_x + 1$, $f_s = f_y$, and $e = \mathtt{+0.0}$.
\end{enumerate}

\noindent \textbf{Lemma SETZ-ESC:} If $s_x = s_y$, $e_x > e_y$, $f_x < f_y$, and $e_x < f_x + (p - 1)$, then one of the following statements holds:
\begin{enumerate}
   \item $s_s = s_x$, $e_x \le e_s \le e_x + 1$, $f_s = f_x$, and $e = \mathtt{+0.0}$.
\end{enumerate}

\noindent \textbf{Lemma SETZ-ESS:} If $s_x = s_y$, $e_x = e_y$, $f_x < f_y$, $e_x < f_x + (p - 1)$, and $e_y < f_y + (p - 1)$, then one of the following statements holds:
\begin{enumerate}
   \item $s_s = s_x$, $e_s = e_x + 1$, $f_s = f_x$, and $e = \mathtt{+0.0}$.
\end{enumerate}

\noindent \textbf{Lemma SETZ-EDP0:} If $s_x \ne s_y$, $e_x > e_y + 1 > f_x > f_y$, and $e_x < f_y + p$, then one of the following statements holds:
\begin{enumerate}
   \item $s_s = s_x$, $e_x - 1 \le e_s \le e_x$, $f_s = f_y$, and $e = \mathtt{+0.0}$.
\end{enumerate}

\noindent \textbf{Lemma SETZ-EDP1:} If $s_x \ne s_y$, $e_x = e_y + 1$, $e_y > f_x > f_y$, and $e_x < f_y + p$, then one of the following statements holds:
\begin{enumerate}
   \item $s_s = s_x$, $f_x \le e_s \le e_x$, $f_s = f_y$, and $e = \mathtt{+0.0}$.
\end{enumerate}

\noindent \textbf{Lemma SETZ-EDP2:} If $s_x \ne s_y$, $e_x = e_y + 1 = f_x$, and $f_x > f_y + 1$, then one of the following statements holds:
\begin{enumerate}
   \item $s_s = s_x$, $f_y \le e_s \le e_x - 2$, $f_s = f_y$, and $e = \mathtt{+0.0}$.
\end{enumerate}

\noindent \textbf{Lemma SETZ-EDP3:} If $s_x \ne s_y$, $e_x = e_y + 1 = f_x = f_y + 1$, then one of the following statements holds:
\begin{enumerate}
   \item $s_s = s_x$, $f_y \le e_s \le e_x - 1$, $f_s = f_y$, and $e = \mathtt{+0.0}$.
\end{enumerate}

\noindent \textbf{Lemma SETZ-EDC0:} If $s_x \ne s_y$, $e_x > e_y + 1$, and $f_x < f_y$, then one of the following statements holds:
\begin{enumerate}
   \item $s_s = s_x$, $e_x - 1 \le e_s \le e_x$, $f_s = f_x$, and $e = \mathtt{+0.0}$.
\end{enumerate}

\noindent \textbf{Lemma SETZ-EDC1:} If $s_x \ne s_y$, $e_x = e_y + 1$, and $f_x < f_y$, then one of the following statements holds:
\begin{enumerate}
   \item $s_s = s_x$, $f_y \le e_s \le e_x$, $f_s = f_x$, and $e = \mathtt{+0.0}$.
\end{enumerate}

\noindent \textbf{Lemma SETZ-EDC2:} If $s_x \ne s_y$, $e_x = e_y = f_y$, and $f_x < f_y$, then one of the following statements holds:
\begin{enumerate}
   \item $s_s = s_x$, $f_x \le e_s \le e_x - 1$, $f_s = f_x$, and $e = \mathtt{+0.0}$.
\end{enumerate}

\noindent \textbf{Lemma SETZ-EDS0:} If $s_x \ne s_y$, $e_x = e_y$, $f_x < f_y$, $e_x > f_x + 1$, and $e_y > f_y + 1$, then one of the following statements holds:
\begin{enumerate}
   \item $f_x \le e_s \le e_x - 1$, $f_s = f_x$, and $e = \mathtt{+0.0}$.
\end{enumerate}

\noindent \textbf{Lemma SETZ-EDS1:} If $s_x \ne s_y$, $e_x = e_y$, $e_x > f_x + 1$, and $e_y = f_y + 1$, then one of the following statements holds:
\begin{enumerate}
   \item $f_x \le e_s \le e_x - 2$, $f_s = f_x$, and $e = \mathtt{+0.0}$.
\end{enumerate}

\subsection*{Lemma Family SETZ-O}

Lemmas in Family SETZ-O (for ``overlap'') apply to addends with completely overlapping mantissas whose floating-point sum has nonzero error. These Lemmas support both our SETZ and SELTZO results. We assume throughout this Lemma Family that $x$ and $y$ are nonzero. \\

\noindent \textbf{Lemma SETZ-O0:} If $s_x = s_y$, $e_x = f_x + (p - 1)$, and $e_x > e_y > f_y > f_x$, then one of the following statements holds:
\begin{enumerate}
   \item $s_s = s_x$, $e_s = e_x$, $f_s = f_x$, and $e = \mathtt{+0.0}$.
   \item $s_s = s_x$, $e_s = e_x + 1$, $e_x - (p - 3) \le f_s \le e_y$, $f_x \le e_e \le e_x - (p - 1)$, and $f_e = f_x$.
   \item $s_s = s_x$, $e_s = e_x + 1$, $f_s = e_x + 1$, $s_e = s_y$, $f_x \le e_e \le e_x - (p - 1)$, and $f_e = f_x$.
\end{enumerate}

\noindent \textbf{Lemma SETZ-O1:} If $s_x = s_y$, $e_x = f_x + (p - 1)$, and $e_x > e_y = f_y > f_x + 1$, then one of the following statements holds:
\begin{enumerate}
   \item $s_s = s_x$, $e_s = e_x$, $f_s = f_x$, and $e = \mathtt{+0.0}$.
   \item $s_s = s_x$, $e_s = e_x + 1$, $e_x - (p - 3) \le f_s \le e_y - 1$, $f_x \le e_e \le e_x - (p - 1)$, and $f_e = f_x$.
   \item $s_s = s_x$, $e_s = e_x + 1$, $f_s = e_y$, $s_e \ne s_y$, $f_x \le e_e \le e_x - (p - 1)$, and $f_e = f_x$.
   \item $s_s = s_x$, $e_s = e_x + 1$, $f_s = e_x + 1$, $s_e = s_y$, $f_x \le e_e \le e_x - (p - 1)$, and $f_e = f_x$.
\end{enumerate}

\noindent \textbf{Lemma SETZ-O2:} If $s_x = s_y$, $e_x = f_x + (p - 1)$, and $e_y = f_y = f_x + 1$, then one of the following statements holds:
\begin{enumerate}
   \item $s_s = s_x$, $e_s = e_x$, $f_s = f_x$, and $e = \mathtt{+0.0}$.
   \item $s_s = s_x$, $e_s = e_x + 1$, $f_s = e_x + 1$, $s_e = s_y$, $f_x \le e_e \le e_x - (p - 1)$, and $f_e = f_x$.
\end{enumerate}

\subsection*{Lemma Family SETZ-1}

These Lemmas support both our SETZ and SELTZO results. We assume throughout this Lemma Family that $x$ and $y$ are nonzero. \\

\noindent \textbf{Lemma SETZ-1:} If $e_x < e_y + p$, $e_x > f_y + p$, $f_x > e_y + 1$, and $e_x > f_x$ or $s_x = s_y$, then one of the following statements holds:
\begin{enumerate}
   \item $s_s = s_x$, $e_s = e_x$, $e_x - (p - 1) \le f_s \le e_y - 1$, $f_y \le e_e \le e_x - (p + 1)$, and $f_e = f_y$.
   \item $s_s = s_x$, $e_s = e_x$, $f_s = e_y$, $s_e = s_y$, $f_y \le e_e \le e_x - (p + 1)$, and $f_e = f_y$.
   \item $s_s = s_x$, $e_s = e_x$, $f_s = e_y + 1$, $s_e \ne s_y$, $f_y \le e_e \le e_x - (p + 1)$, and $f_e = f_y$.
\end{enumerate}

\noindent \textbf{Lemma SETZ-1A:} If $e_x = e_y + p$, $e_x > f_y + p$, $f_x > e_y + 1$, and $e_x > f_x$ or $s_x = s_y$, then one of the following statements holds:
\begin{enumerate}
   \item $s_s = s_x$, $e_s = e_x$, $f_s = e_y + 1$, $s_e \ne s_y$, $f_y \le e_e \le e_x - (p + 1)$, and $f_e = f_y$.
\end{enumerate}

\noindent \textbf{Lemma SETZ-1B0:} If $e_x < e_y + (p - 1)$, $e_x = f_y + p$, $f_x > e_y + 1$, and $e_x > f_x$ or $s_x = s_y$, then one of the following statements holds:
\begin{enumerate}
   \item $s_s = s_x$, $e_s = e_x$, $e_x - (p - 2) \le f_s \le e_y - 1$, $f_y \le e_e \le e_x - p$, and $f_e = f_y$.
   \item $s_s = s_x$, $e_s = e_x$, $f_s = e_y$, $s_e = s_y$, $f_y \le e_e \le e_x - p$, and $f_e = f_y$.
   \item $s_s = s_x$, $e_s = e_x$, $f_s = e_y + 1$, $s_e \ne s_y$, $f_y \le e_e \le e_x - p$, and $f_e = f_y$.
\end{enumerate}

\noindent \textbf{Lemma SETZ-1B1:} If $e_x = e_y + (p - 1)$, $e_x = f_y + p$, $f_x > e_y + 1$, and $e_x > f_x$ or $s_x = s_y$, then one of the following statements holds:
\begin{enumerate}
   \item $s_s = s_x$, $e_s = e_x$, $f_s = e_y + 1$, $s_e \ne s_y$, $f_y \le e_e \le e_x - p$, and $f_e = f_y$.
\end{enumerate}

\subsection*{Lemma Family SETZ-2}

These Lemmas support both our SETZ and SELTZO results. We assume throughout this Lemma Family that $x$ and $y$ are nonzero. \\

\noindent \textbf{Lemma SETZ-2:} If $s_x = s_y$, $e_x > f_y + p$, and $f_x < e_y$, then one of the following statements holds:
\begin{enumerate}
   \item $s_s = s_x$, $e_s = e_x$, $e_x - (p - 1) \le f_s \le e_x - 1$, $f_y \le e_e \le e_x - (p + 1)$, and $f_e = f_y$.
   \item $s_s = s_x$, $e_s = e_x + 1$, $e_x - (p - 2) \le f_s \le e_y$, $f_y \le e_e \le e_x - p$, and $f_e = f_y$.
   \item $s_s = s_x$, $e_s = e_x + 1$, $f_s = e_x + 1$, $s_e \ne s_y$, $f_y \le e_e \le e_x - (p + 1)$, and $f_e = f_y$.
   \item $s_s = s_x$, $e_s = e_x + 1$, $f_s = e_x + 1$, $s_e = s_y$, $f_y \le e_e \le e_x - p$, and $f_e = f_y$.
\end{enumerate}

\noindent \textbf{Lemma SETZ-2A0:} If $s_x = s_y$, $e_x = f_y + p$, $f_x < e_y$, and $e_y < f_y + (p - 1)$, then one of the following statements holds:
\begin{enumerate}
   \item $s_s = s_x$, $e_s = e_x$, $e_x - (p - 2) \le f_s \le e_x - 1$, $f_y \le e_e \le e_x - p$, and $f_e = f_y$.
   \item $s_s = s_x$, $e_s = e_x + 1$, $e_x - (p - 2) \le f_s \le e_y$, $f_y \le e_e \le e_x - p$, and $f_e = f_y$.
   \item $s_s = s_x$, $e_s = e_x + 1$, $f_s = e_x + 1$, $f_y \le e_e \le e_x - p$, and $f_e = f_y$.
\end{enumerate}

\noindent \textbf{Lemma SETZ-2A1:} If $s_x = s_y$, $e_x = f_y + p$, $f_x + 1 < e_y$, and $e_y = f_y + (p - 1)$, then one of the following statements holds:
\begin{enumerate}
   \item $s_s = s_x$, $e_s = e_x$, $e_x - (p - 2) \le f_s \le e_x - 2$, $f_y \le e_e \le e_x - p$, and $f_e = f_y$.
   \item $s_s = s_x$, $e_s = e_x + 1$, $e_x - (p - 2) \le f_s \le e_y$, $f_y \le e_e \le e_x - p$, and $f_e = f_y$.
   \item $s_s = s_x$, $e_s = e_x + 1$, $f_s = e_x + 1$, $f_y \le e_e \le e_x - p$, and $f_e = f_y$.
\end{enumerate}

\noindent \textbf{Lemma SETZ-2A2:} If $s_x = s_y$, $e_x = f_y + p$, $f_x + 1 = e_y$, and $e_y = f_y + (p - 1)$, then one of the following statements holds:
\begin{enumerate}
   \item $s_s = s_x$, $e_s = e_x$, $e_x - (p - 2) \le f_s \le e_y - 2$, $f_y \le e_e \le e_x - p$, and $f_e = f_y$.
   \item $s_s = s_x$, $e_s = e_x$, $f_s = e_y - 1$, $s_e = s_y$, $f_y \le e_e \le e_x - p$, and $f_e = f_y$.
   \item $s_s = s_x$, $e_s = e_x + 1$, $e_x - (p - 2) \le f_s \le e_y$, $f_y \le e_e \le e_x - p$, and $f_e = f_y$.
   \item $s_s = s_x$, $e_s = e_x + 1$, $f_s = e_x + 1$, $f_y \le e_e \le e_x - p$, and $f_e = f_y$.
\end{enumerate}

\noindent \textbf{Lemma SETZ-2B0:} If $s_x = s_y$, $e_x > f_y + p$, $f_x = e_y$, and $e_x < f_x + (p - 1)$, then one of the following statements holds:
\begin{enumerate}
   \item $s_s = s_x$, $e_s = e_x$, $e_x - (p - 1) \le f_s \le e_y - 1$, $f_y \le e_e \le e_x - (p + 1)$, and $f_e = f_y$.
   \item $s_s = s_x$, $e_s = e_x$, $f_s = e_y$, $s_e \ne s_y$, $f_y \le e_e \le e_x - (p + 1)$, and $f_e = f_y$.
   \item $s_s = s_x$, $e_s = e_x$, $e_y + 1 \le f_s \le e_x - 1$, $s_e = s_y$, $f_y \le e_e \le e_x - (p + 1)$, and $f_e = f_y$.
   \item $s_s = s_x$, $e_s = e_x + 1$, $e_x - (p - 2) \le f_s \le e_y - 1$, $f_y \le e_e \le e_x - p$, and $f_e = f_y$.
   \item $s_s = s_x$, $e_s = e_x + 1$, $f_s = e_y$, $s_e \ne s_y$, $f_y \le e_e \le e_x - p$, and $f_e = f_y$.
   \item $s_s = s_x$, $e_s = e_x + 1$, $f_s = e_x + 1$, $s_e = s_y$, $f_y \le e_e \le e_x - p$, and $f_e = f_y$.
\end{enumerate}

\noindent \textbf{Lemma SETZ-2B1:} If $s_x = s_y$, $e_x > f_y + p$, $f_x = e_y$, and $e_x = f_x + (p - 1)$, then one of the following statements holds:
\begin{enumerate}
   \item $s_s = s_x$, $e_s = e_x$, $f_s = e_y$, $s_e \ne s_y$, $f_y \le e_e \le e_x - (p + 1)$, and $f_e = f_y$.
   \item $s_s = s_x$, $e_s = e_x$, $e_y + 1 \le f_s \le e_x - 1$, $s_e = s_y$, $f_y \le e_e \le e_x - (p + 1)$, and $f_e = f_y$.
   \item $s_s = s_x$, $e_s = e_x + 1$, $f_s = e_x + 1$, $s_e = s_y$, $f_y \le e_e \le e_x - p$, and $f_e = f_y$.
\end{enumerate}

\noindent \textbf{Lemma SETZ-2C0:} If $s_x = s_y$, $e_x = f_y + (p - 1)$, $f_x < e_y$, $e_x < f_x + (p - 1)$, and $e_y < f_y + (p - 1)$, then one of the following statements holds:
\begin{enumerate}
   \item $s_s = s_x$, $e_s = e_x$, $f_s = f_y$, and $e = \mathtt{+0.0}$.
   \item $s_s = s_x$, $e_s = e_x + 1$, $e_x - (p - 3) \le f_s \le e_y$, $f_y \le e_e \le e_x - (p - 1)$, and $f_e = f_y$.
   \item $s_s = s_x$, $e_s = e_x + 1$, $f_s = e_x + 1$, $s_e = s_y$, $f_y \le e_e \le e_x - (p - 1)$, and $f_e = f_y$.
\end{enumerate}

\noindent \textbf{Lemma SETZ-2C1:} If $s_x = s_y$, $e_x = f_y + (p - 1)$, $f_x < e_y$, $e_x < f_x + (p - 1)$, and $e_y = f_y + (p - 1)$, then one of the following statements holds:
\begin{enumerate}
   \item $s_s = s_x$, $e_s = e_x + 1$, $e_x - (p - 3) \le f_s \le e_y$, $f_y \le e_e \le e_x - (p - 1)$, and $f_e = f_y$.
\end{enumerate}

\noindent \textbf{Lemma SETZ-2D0:} If $s_x = s_y$, $e_x > f_y + p$, $f_x = e_y + 1$, and $e_x < f_x + (p - 1)$, then one of the following statements holds:
\begin{enumerate}
   \item $s_s = s_x$, $e_s = e_x$, $e_x - (p - 1) \le f_s \le e_y - 1$, $f_y \le e_e \le e_x - (p + 1)$, and $f_e = f_y$.
   \item $s_s = s_x$, $e_s = e_x$, $f_s = e_y$, $s_e = s_y$, $f_y \le e_e \le e_x - (p + 1)$, and $f_e = f_y$.
   \item $s_s = s_x$, $e_s = e_x$, $e_y + 2 \le f_s \le e_x - 1$, $s_e \ne s_y$, $f_y \le e_e \le e_x - (p + 1)$, and $f_e = f_y$.
   \item $s_s = s_x$, $e_s = e_x + 1$, $f_s = e_x + 1$, $s_e \ne s_y$, $f_y \le e_e \le e_x - (p + 1)$, and $f_e = f_y$.
\end{enumerate}

\noindent \textbf{Lemma SETZ-2D1:} If $s_x = s_y$, $e_x > f_y + p$, $f_x = e_y + 1$, and $e_x = f_x + (p - 1)$, then one of the following statements holds:
\begin{enumerate}
   \item $s_s = s_x$, $e_s = e_x$, $e_y + 2 \le f_s \le e_x - 1$, $s_e \ne s_y$, $f_y \le e_e \le e_x - (p + 1)$, and $f_e = f_y$.
   \item $s_s = s_x$, $e_s = e_x + 1$, $f_s = e_x + 1$, $s_e \ne s_y$, $f_y \le e_e \le e_x - (p + 1)$, and $f_e = f_y$.
\end{enumerate}

\noindent \textbf{Lemma SETZ-2AB0:} If $s_x = s_y$, $e_x = f_y + p$, $f_x = e_y$, $e_x < f_x + (p - 1)$, and $e_y < f_y + (p - 1)$, then one of the following statements holds:
\begin{enumerate}
   \item $s_s = s_x$, $e_s = e_x$, $e_x - (p - 2) \le f_s \le e_y - 1$, $f_y \le e_e \le e_x - p$, and $f_e = f_y$.
   \item $s_s = s_x$, $e_s = e_x$, $f_s = e_y$, $s_e \ne s_y$, $f_y \le e_e \le e_x - p$, and $f_e = f_y$.
   \item $s_s = s_x$, $e_s = e_x$, $e_y + 1 \le f_s \le e_x - 1$, $s_e = s_y$, $f_y \le e_e \le e_x - p$, and $f_e = f_y$.
   \item $s_s = s_x$, $e_s = e_x + 1$, $e_x - (p - 2) \le f_s \le e_y - 1$, $f_y \le e_e \le e_x - p$, and $f_e = f_y$.
   \item $s_s = s_x$, $e_s = e_x + 1$, $f_s = e_y$, $s_e \ne s_y$, $f_y \le e_e \le e_x - p$, and $f_e = f_y$.
   \item $s_s = s_x$, $e_s = e_x + 1$, $f_s = e_x + 1$, $s_e = s_y$, $f_y \le e_e \le e_x - p$, and $f_e = f_y$.
\end{enumerate}

\noindent \textbf{Lemma SETZ-2AB1:} If $s_x = s_y$, $e_x = f_y + p$, $f_x = e_y$, and $e_x = f_x + (p - 1)$, then one of the following statements holds:
\begin{enumerate}
   \item $s_s = s_x$, $e_s = e_x$, $e_y + 1 \le f_s \le e_x - 1$, $s_e = s_y$, $f_y \le e_e \le e_x - p$, and $f_e = f_y$.
   \item $s_s = s_x$, $e_s = e_x + 1$, $f_s = e_x + 1$, $s_e = s_y$, $f_y \le e_e \le e_x - p$, and $f_e = f_y$.
\end{enumerate}

\noindent \textbf{Lemma SETZ-2AB2:} If $s_x = s_y$, $e_x = f_y + p$, $f_x = e_y$, and $e_y = f_y + (p - 1)$, then one of the following statements holds:
\begin{enumerate}
   \item $s_s = s_x$, $e_s = e_x + 1$, $e_x - (p - 2) \le f_s \le e_y - 1$, $f_y \le e_e \le e_x - p$, and $f_e = f_y$.
   \item $s_s = s_x$, $e_s = e_x + 1$, $f_s = e_y$, $s_e \ne s_y$, $f_y \le e_e \le e_x - p$, and $f_e = f_y$.
   \item $s_s = s_x$, $e_s = e_x + 1$, $f_s = e_x + 1$, $s_e = s_y$, $f_y \le e_e \le e_x - p$, and $f_e = f_y$.
\end{enumerate}

\noindent \textbf{Lemma SETZ-2BC0:} If $s_x = s_y$, $e_x = f_y + (p - 1)$, $f_x = e_y$, $e_y > f_y + 1$, and $e_y < f_y + (p - 2)$, then one of the following statements holds:
\begin{enumerate}
   \item $s_s = s_x$, $e_s = e_x$, $f_s = f_y$, and $e = \mathtt{+0.0}$.
   \item $s_s = s_x$, $e_s = e_x + 1$, $e_x - (p - 3) \le f_s \le e_y - 1$, $f_y \le e_e \le e_x - (p - 1)$, and $f_e = f_y$.
   \item $s_s = s_x$, $e_s = e_x + 1$, $f_s = e_y$, $s_e \ne s_y$, $f_y \le e_e \le e_x - (p - 1)$, and $f_e = f_y$.
   \item $s_s = s_x$, $e_s = e_x + 1$, $f_s = e_x + 1$, $s_e = s_y$, $f_y \le e_e \le e_x - (p - 1)$, and $f_e = f_y$.
\end{enumerate}

\noindent \textbf{Lemma SETZ-2BC1:} If $s_x = s_y$, $e_x = f_y + (p - 1)$, $f_x = e_y$, and $e_y > f_y + (p - 3)$, then one of the following statements holds:
\begin{enumerate}
   \item $s_s = s_x$, $e_s = e_x + 1$, $e_x - (p - 3) \le f_s \le e_y - 1$, $f_y \le e_e \le e_x - (p - 1)$, and $f_e = f_y$.
   \item $s_s = s_x$, $e_s = e_x + 1$, $f_s = e_y$, $s_e \ne s_y$, $f_y \le e_e \le e_x - (p - 1)$, and $f_e = f_y$.
   \item $s_s = s_x$, $e_s = e_x + 1$, $f_s = e_x + 1$, $s_e = s_y$, $f_y \le e_e \le e_x - (p - 1)$, and $f_e = f_y$.
\end{enumerate}

\noindent \textbf{Lemma SETZ-2BC2:} If $s_x = s_y$, $e_x = f_y + (p - 1)$, $f_x = e_y$, and $e_y = f_y + 1$, then one of the following statements holds:
\begin{enumerate}
   \item $s_s = s_x$, $e_s = e_x$, $f_s = f_y$, and $e = \mathtt{+0.0}$.
   \item $s_s = s_x$, $e_s = e_x + 1$, $f_s = e_x + 1$, $s_e = s_y$, $f_y \le e_e \le e_x - (p - 1)$, and $f_e = f_y$.
\end{enumerate}

\noindent \textbf{Lemma SETZ-2AD0:} If $s_x = s_y$, $e_x = f_y + p$, $f_x = e_y + 1$, and $e_x < f_x + (p - 2)$, then one of the following statements holds:
\begin{enumerate}
   \item $s_s = s_x$, $e_s = e_x$, $e_x - (p - 2) \le f_s \le e_y - 1$, $f_y \le e_e \le e_x - p$, and $f_e = f_y$.
   \item $s_s = s_x$, $e_s = e_x$, $f_s = e_y$, $s_e = s_y$, $f_y \le e_e \le e_x - p$, and $f_e = f_y$.
   \item $s_s = s_x$, $e_s = e_x$, $e_y + 2 \le f_s \le e_x - 1$, $s_e \ne s_y$, $f_y \le e_e \le e_x - p$, and $f_e = f_y$.
   \item $s_s = s_x$, $e_s = e_x + 1$, $f_s = e_x + 1$, $s_e \ne s_y$, $f_y \le e_e \le e_x - p$, and $f_e = f_y$.
\end{enumerate}

\noindent \textbf{Lemma SETZ-2AD1:} If $s_x = s_y$, $e_x = f_y + p$, $f_x = e_y + 1$, and $e_x > f_x + (p - 3)$, then one of the following statements holds:
\begin{enumerate}
   \item $s_s = s_x$, $e_s = e_x$, $e_y + 2 \le f_s \le e_x - 1$, $s_e \ne s_y$, $f_y \le e_e \le e_x - p$, and $f_e = f_y$.
   \item $s_s = s_x$, $e_s = e_x + 1$, $f_s = e_x + 1$, $s_e \ne s_y$, $f_y \le e_e \le e_x - p$, and $f_e = f_y$.
\end{enumerate}

\subsection*{Lemma Family SETZ-3}

These Lemmas support both our SETZ and SELTZO results. We assume throughout this Lemma Family that $x$ and $y$ are nonzero. \\

\noindent \textbf{Lemma SETZ-3:} If $s_x \ne s_y$, $e_x > f_y + (p + 1)$, and $f_x < e_y$, then one of the following statements holds:
\begin{enumerate}
    \item $s_s = s_x$, $e_s = e_x - 1$, $e_x - p \le f_s \le e_y$, $f_y \le e_e \le e_x - (p + 2)$, and $f_e = f_y$.
    \item $s_s = s_x$, $e_s = e_x$, $e_x - (p - 1) \le f_s \le e_x - 1$, $f_y \le e_e \le e_x - (p + 1)$, and $f_e = f_y$.
    \item $s_s = s_x$, $e_s = e_x$, $f_s = e_x$, $s_e = s_y$, $f_y \le e_e \le e_x - (p + 2)$, and $f_e = f_y$.
    \item $s_s = s_x$, $e_s = e_x$, $f_s = e_x$, $s_e \ne s_y$, $f_y \le e_e \le e_x - (p + 1)$, and $f_e = f_y$.
\end{enumerate}

\noindent \textbf{Lemma SETZ-3A:} If $s_x \ne s_y$, $e_x = f_y + (p + 1)$, and $f_x < e_y$, then one of the following statements holds:
\begin{enumerate}
    \item $s_s = s_x$, $e_s = e_x - 1$, $e_x - (p - 1) \le f_s \le e_y$, $f_y \le e_e \le e_x - (p + 1)$, and $f_e = f_y$.
    \item $s_s = s_x$, $e_s = e_x$, $e_x - (p - 1) \le f_s \le e_x$, $f_y \le e_e \le e_x - (p + 1)$, and $f_e = f_y$.
\end{enumerate}

\noindent \textbf{Lemma SETZ-3B:} If $s_x \ne s_y$, $e_x > f_y + (p + 1)$, and $f_x = e_y$, then one of the following statements holds:
\begin{enumerate}
    \item $s_s = s_x$, $e_s = e_x - 1$, $e_x - p \le f_s \le e_y - 1$, $f_y \le e_e \le e_x - (p + 2)$, and $f_e = f_y$.
    \item $s_s = s_x$, $e_s = e_x - 1$, $f_s = e_y$, $s_e \ne s_y$, $f_y \le e_e \le e_x - (p + 2)$, and $f_e = f_y$.
    \item $s_s = s_x$, $e_s = e_x$, $e_x - (p - 1) \le f_s \le e_y - 1$, $f_y \le e_e \le e_x - (p + 1)$, and $f_e = f_y$.
    \item $s_s = s_x$, $e_s = e_x$, $f_s = e_y$, $s_e \ne s_y$, $f_y \le e_e \le e_x - (p + 1)$, and $f_e = f_y$.
    \item $s_s = s_x$, $e_s = e_x$, $e_y + 1 \le f_s \le e_x - 1$, $s_e = s_y$, $f_y \le e_e \le e_x - (p + 1)$, and $f_e = f_y$.
    \item $s_s = s_x$, $e_s = e_x$, $f_s = e_x$, $s_e = s_y$, $f_y \le e_e \le e_x - (p + 2)$, and $f_e = f_y$.
\end{enumerate}

\noindent \textbf{Lemma SETZ-3C0:} If $s_x \ne s_y$, $e_x = f_y + p$, $f_x < e_y$, and $e_y < f_y + (p - 1)$, then one of the following statements holds:
\begin{enumerate}
    \item $s_s = s_x$, $e_s = e_x - 1$, $f_s = f_y$, and $e = \mathtt{+0.0}$.
    \item $s_s = s_x$, $e_s = e_x$, $e_x - (p - 2) \le f_s \le e_x - 1$, $f_y \le e_e \le e_x - p$, and $f_e = f_y$.
    \item $s_s = s_x$, $e_s = e_x$, $f_s = e_x$, $s_e \ne s_y$, $f_y \le e_e \le e_x - p$, and $f_e = f_y$.
\end{enumerate}

\noindent \textbf{Lemma SETZ-3C1:} If $s_x \ne s_y$, $e_x = f_y + p$, $f_x + 1 < e_y$, and $e_y = f_y + (p - 1)$, then one of the following statements holds:
\begin{enumerate}
    \item $s_s = s_x$, $f_x \le e_s \le e_x - 1$, $f_s = f_y$, and $e = \mathtt{+0.0}$.
    \item $s_s = s_x$, $e_s = e_x$, $e_x - (p - 2) \le f_s \le e_x - 2$, $f_y \le e_e \le e_x - p$, and $f_e = f_y$.
    \item $s_s = s_x$, $e_s = e_x$, $f_s = e_x$, $s_e \ne s_y$, $f_y \le e_e \le e_x - p$, and $f_e = f_y$.
\end{enumerate}

\noindent \textbf{Lemma SETZ-3C2:} If $s_x \ne s_y$, $e_x = f_y + p$, $f_x + 1 = e_y$, and $e_y = f_y + (p - 1)$, then one of the following statements holds:
\begin{enumerate}
    \item $s_s = s_x$, $e_x - 2 \le e_s \le e_x - 1$, $f_s = f_y$, and $e = \mathtt{+0.0}$.
    \item $s_s = s_x$, $e_s = e_x$, $e_x - (p - 2) \le f_s \le e_y - 2$, $f_y \le e_e \le e_x - p$, and $f_e = f_y$.
    \item $s_s = s_x$, $e_s = e_x$, $f_s = e_y - 1$, $s_e = s_y$, $f_y \le e_e \le e_x - p$, and $f_e = f_y$.
    \item $s_s = s_x$, $e_s = e_x$, $f_s = e_x$, $s_e \ne s_y$, $f_y \le e_e \le e_x - p$, and $f_e = f_y$.
\end{enumerate}

\noindent \textbf{Lemma SETZ-3D0:} If $s_x \ne s_y$, $e_x > f_y + p$, $f_x = e_y + 1$, and $e_x < f_x + (p - 1)$, then one of the following statements holds:
\begin{enumerate}
   \item $s_s = s_x$, $e_s = e_x$, $e_x - (p - 1) \le f_s \le e_y - 1$, $f_y \le e_e \le e_x - (p + 1)$, and $f_e = f_y$.
   \item $s_s = s_x$, $e_s = e_x$, $f_s = e_y$, $s_e = s_y$, $f_y \le e_e \le e_x - (p + 1)$, and $f_e = f_y$.
   \item $s_s = s_x$, $e_s = e_x$, $e_y + 2 \le f_s \le e_x$, $s_e \ne s_y$, $f_y \le e_e \le e_x - (p + 1)$, and $f_e = f_y$.
\end{enumerate}

\noindent \textbf{Lemma SETZ-3D1:} If $s_x \ne s_y$, $e_x > f_y + p$, $f_x = e_y + 1$, and $e_x = f_x + (p - 1)$, then one of the following statements holds:
\begin{enumerate}
   \item $s_s = s_x$, $e_s = e_x$, $e_y + 2 \le f_s \le e_x$, $s_e \ne s_y$, $f_y \le e_e \le e_x - (p + 1)$, and $f_e = f_y$.
\end{enumerate}

\noindent \textbf{Lemma SETZ-3AB:} If $s_x \ne s_y$, $e_x = f_y + (p + 1)$, and $f_x = e_y$, then one of the following statements holds:
\begin{enumerate}
   \item $s_s = s_x$, $e_s = e_x - 1$, $e_x - (p - 1) \le f_s \le e_y - 1$, $f_y \le e_e \le e_x - (p + 1)$, and $f_e = f_y$.
   \item $s_s = s_x$, $e_s = e_x - 1$, $f_s = e_y$, $s_e \ne s_y$, $f_y \le e_e \le e_x - (p + 1)$, and $f_e = f_y$.
   \item $s_s = s_x$, $e_s = e_x$, $e_x - (p - 1) \le f_s \le e_y - 1$, $f_y \le e_e \le e_x - (p + 1)$, and $f_e = f_y$.
   \item $s_s = s_x$, $e_s = e_x$, $f_s = e_y$, $s_e \ne s_y$, $f_y \le e_e \le e_x - (p + 1)$, and $f_e = f_y$.
   \item $s_s = s_x$, $e_s = e_x$, $e_y + 1 \le f_s \le e_x$, $s_e = s_y$, $f_y \le e_e \le e_x - (p + 1)$, and $f_e = f_y$.
\end{enumerate}

\noindent \textbf{Lemma SETZ-3BC0:} If $s_x \ne s_y$, $e_x = f_y + p$, $f_x = e_y$, $e_x > f_x + 1$, and $e_y > f_y + 1$, then one of the following statements holds:
\begin{enumerate}
   \item $s_s = s_x$, $e_s = e_x - 1$, $f_s = f_y$, and $e = \mathtt{+0.0}$.
   \item $s_s = s_x$, $e_s = e_x$, $e_x - (p - 2) \le f_s \le e_y - 1$, $f_y \le e_e \le e_x - p$, and $f_e = f_y$.
   \item $s_s = s_x$, $e_s = e_x$, $f_s = e_y$, $s_e \ne s_y$, $f_y \le e_e \le e_x - p$, and $f_e = f_y$.
   \item $s_s = s_x$, $e_s = e_x$, $e_y + 1 \le f_s \le e_x - 1$, $s_e = s_y$, $f_y \le e_e \le e_x - p$, and $f_e = f_y$.
\end{enumerate}

\noindent \textbf{Lemma SETZ-3BC1:} If $s_x \ne s_y$, $e_x = f_y + p$, $f_x = e_y$, and $e_y = f_y + 1$, then one of the following statements holds:
\begin{enumerate}
   \item $s_s = s_x$, $e_s = e_x - 1$, $f_s = f_y$, and $e = \mathtt{+0.0}$.
   \item $s_s = s_x$, $e_s = e_x$, $e_y + 1 \le f_s \le e_x - 1$, $s_e = s_y$, $f_y \le e_e \le e_x - p$, and $f_e = f_y$.
\end{enumerate}

\noindent \textbf{Lemma SETZ-3CD0:} If $s_x \ne s_y$, $e_x = f_y + p$, $f_x = e_y + 1$, $e_x > f_x$, and $e_y > f_y + 1$, then one of the following statements holds:
\begin{enumerate}
   \item $s_s = s_x$, $e_s = e_x$, $e_x - (p - 2) \le f_s \le e_y - 1$, $f_y \le e_e \le e_x - p$, and $f_e = f_y$.
   \item $s_s = s_x$, $e_s = e_x$, $f_s = e_y$, $s_e = s_y$, $f_y \le e_e \le e_x - p$, and $f_e = f_y$.
   \item $s_s = s_x$, $e_s = e_x$, $e_y + 2 \le f_s \le e_x$, $s_e \ne s_y$, $f_y \le e_e \le e_x - p$, and $f_e = f_y$.
\end{enumerate}

\noindent \textbf{Lemma SETZ-3CD1:} If $s_x \ne s_y$, $e_x = f_y + p$, $f_x = e_y + 1$, and $e_y < f_y + 2$, then one of the following statements holds:
\begin{enumerate}
   \item $s_s = s_x$, $e_s = e_x$, $e_y + 2 \le f_s \le e_x$, $s_e \ne s_y$, $f_y \le e_e \le e_x - p$, and $f_e = f_y$.
\end{enumerate}

\subsection*{Lemma Family SETZ-4}

These Lemmas support both our SETZ and SELTZO results. We assume throughout this Lemma Family that $x$ and $y$ are nonzero. \\

\noindent \textbf{Lemma SETZ-4:} If $s_x \ne s_y$, $e_x > f_y + (p + 1)$, $f_x < e_y + (p + 1)$, and $e_x = f_x$, then one of the following statements holds:
\begin{enumerate}
    \item $s_s = s_x$, $e_s = e_x - 1$, $e_x - p \le f_s \le e_y - 1$, $f_y \le e_e \le e_x - (p + 2)$, and $f_e = f_y$.
    \item $s_s = s_x$, $e_s = e_x - 1$, $f_s = e_y$, $s_e = s_y$, $f_y \le e_e \le e_x - (p + 2)$, and $f_e = f_y$.
    \item $s_s = s_x$, $e_s = e_x - 1$, $f_y + 1$, $s_e \ne s_y$, $f_y \le e_e \le e_x - (p + 2)$, and $f_e = f_y$.
\end{enumerate}

\noindent \textbf{Lemma SETZ-4A0:} If $s_x \ne s_y$, $e_x = f_y + (p + 1)$, $f_x < e_y + p$, and $e_x = f_x$, then one of the following statements holds:
\begin{enumerate}
    \item $s_s = s_x$, $e_s = e_x - 1$, $e_x - (p - 1) \le f_s \le e_y - 1$, $f_y \le e_e \le e_x - (p + 1)$, and $f_e = f_y$.
    \item $s_s = s_x$, $e_s = e_x - 1$, $f_s = e_y$, $s_e = s_y$, $f_y \le e_e \le e_x - (p + 1)$, and $f_e = f_y$.
    \item $s_s = s_x$, $e_s = e_x - 1$, $f_s = e_y + 1$, $s_e \ne s_y$, $f_y \le e_e \le e_x - (p + 1)$, and $f_e = f_y$.
\end{enumerate}

\noindent \textbf{Lemma SETZ-4A1:} If $s_x \ne s_y$, $e_x = f_y + (p + 1)$, $f_x = e_y + p$, and $e_x = f_x$, then one of the following statements holds:
\begin{enumerate}
    \item $s_s = s_x$, $e_s = e_x - 1$, $e_x - (p - 1) \le f_s \le e_y + 1$, $s_e \ne s_y$, $f_y \le e_e \le e_x - (p + 1)$, and $f_e = f_y$.
\end{enumerate}

\noindent \textbf{Lemma SETZ-4B:} If $s_x \ne s_y$, $e_x > f_y + (p + 1)$, $f_x = e_y + (p + 1)$, and $e_x = f_x$, then one of the following statements holds:
\begin{enumerate}
    \item $s_s = s_x$, $e_s = e_x - 1$, $e_x - p \le f_s \le e_y + 1$, $s_e \ne s_y$, $f_y \le e_e \le e_x - (p + 2)$, and $f_e = f_y$.
\end{enumerate}

\end{document}